\documentclass[11pt]{article}

\usepackage{amsmath}
\usepackage{amsfonts}
\usepackage{amsthm}
\usepackage{amssymb}
\usepackage[english]{babel}
\usepackage{latexsym}
\usepackage[hmargin=2.5cm,vmargin=2.3cm]{geometry}
\usepackage{enumerate,comment}
\usepackage{graphicx}
\usepackage{nicefrac}
\usepackage{mathrsfs}
\usepackage[affil-it]{authblk}
\usepackage{natbib}
\usepackage{framed}
\usepackage[affil-it]{authblk}
\usepackage{multirow}
\setcounter{Maxaffil}{3}
\usepackage{color}
\usepackage{multirow}
\usepackage{cancel}
\usepackage{epstopdf}
\usepackage{hyperref}
\usepackage[normalem]{ulem}

\theoremstyle{plain}
\newtheorem{theorem}{Theorem}[section]
\newtheorem{lemma}[theorem]{Lemma}
\newtheorem{proposition}[theorem]{Proposition}
\newtheorem{corollary}[theorem]{Corollary}
\theoremstyle{definition}

\newtheorem{remark}[theorem]{Remark}

\newcommand{\E}{\mathbb E}

\newcommand{\N}{\mathbb N}

\newcommand{\R}{\mathbb R}

%Comandi nuovi
\oddsidemargin0cm
\topmargin-1.4cm
\textheight23.5cm
\textwidth16cm

\def \qmo{``}

\def \qmcsp{'' }

\def\keywords{\vspace{.5em}
{\noindent\textbf{Keywords}:\,\relax%
}}

\makeatletter
\let\@fnsymbol\@arabic
\makeatother

\begin{document}

\title{Inter-order relations between moments of a Student $t$ distribution, with an application to $L_p$-quantiles}

\author{Valeria Bignozzi\thanks{\mbox{Department of Statistics and Quantitative Methods, University of Milano-Bicocca, Italy \texttt{valeria.bignozzi@unimib.it}}}\,,~Luca Merlo\thanks{{Department of Human Sciences, European University of Rome, Italy \texttt{luca.merlo@unier.it}}}\,,~Lea Petrella\thanks{\mbox{MEMOTEF Department, Sapienza University of Rome, Italy \texttt{lea.petrella@uniroma1.it}}}}

\date{\today}

\maketitle

\parindent 0em \noindent

\begin{abstract}
%This paper introduces inter-order relationship formulas for partial and complete moments about...
%high-order 
 This paper introduces inter-order formulas for partial and complete moments of a Student $t$ distribution with $n$ degrees of freedom. %Using key properties of the Gamma and hypergeometric functions
 We show how the partial moment of order $n-j$ about any real value $m$ can be expressed in terms of the partial moment of order $j-1$ for $j$ in $\{1,\ldots, n\}$. Closed form expressions for the complete moments are also established. We then focus on $L_p$-quantiles, which represent a class of generalized quantiles defined through an asymmetric $p$-power loss function. Based on the results obtained, we also show that for a Student $t$ distribution the $L_{n-j+1}$-quantile and the $L_{j}$-quantile coincide at any confidence level $\tau$ in $(0,1)$.

\end{abstract}
%The proof involves concepts from combinatorial analysis as well as a recursive formula for the truncated moments of the Student t distribution.
%in the field
%We derive analytical expressions for the cdf, quantile function, moments, and quantities useful in financial econometric applications such as the Expected Shortfall.
\keywords{expectiles, generalized quantiles, higher-order moments, partial moments, quantiles, risk measures.}

\section{Introduction}
Partial moments play an important role in several areas of statistics and applied mathematics, including %residual lifetime modeling (\citealp{Lin2003}), {TOGLIERE? 
 reliability modeling (\citealp{lata1983moments}), {insurance pricing (\citealp{Denuit02}) and %optimal decision 
financial risk theory (\citealp{harlow1989asset}). \cite{bawa1975optimal} was among the first ones to introduce partial moments in financial economics, %(\citealt{winkler, antle2010asymmetry, unser2000lower, abraham, gokgoz2017portfolio, anthonisz2012asset}). 
\cite{ANTHONISZ20122122} and \cite{yao2021general} subsequently investigated the use of (lower) partial moments in the context of asset pricing and portfolio optimization, respectively.
 \cite{unser2000lower} examined investors' risk perception in financial decision making using lower partial moments as a measure of perceived risk, while \cite{antle2010asymmetry} considered partial moments as a flexible way to estimate the effects of biophysical and management processes on agricultural output distributions.
 %\cite{antle2010asymmetry} considered partial moments as a flexible way to estimate the effects of biophysical and management processes on agricultural output distributions, while \cite{unser2000lower} examined investors' risk perception in financial decision making using lower partial moments as a measure of perceived risk. %shortfall
%characterize and estimate asymmetric effects
%between management decisions and exogenous random events such as weather and pests in potato production
%as measures of downside risk
%LPMs are measures of downside or shortfall risk in the sense that only negative deviations from a target outcome are taken into consideration
 %The random variable (X − t)+ is interpreted as residual life in the context of life length studies (see for example Lin (2003)) and the moments (1.1) are extensively used in actuarial sciences in the analysis of risks (see Denuit (2002))
 From a theoretical standpoint, their properties and recursive relationships have been investigated in, for instance, \cite{winkler}, where several methods for determining partial moments are discussed or in \cite{abraham} who characterized different continuous distributions belonging to the exponential and Pearson families based on partial moments.
% have established a characterization of different continuous distributions based on partial moments. %where the characterizations of different continuous distributions have been established based on them. 
%family of distributions
%for the Pearson distributions and exponential family of distributions.
%The general formulas therein can be applied to entire families of distribution such as the Pearson and exponential family of distributions. 
 In this work, we are concerned with partial and complete moments of the Student $t$ distribution with arbitrary degrees of freedom $n\in\mathbb{N}$. In both frequentist and Bayesian paradigms, the Student $t$ has thoroughly pervaded the statistical literature in those situations where heavy tails or outliers arise as a robust alternative to the normal distribution (\citealp{jonsson2011heavy}). These characteristics make it suitable for the analysis of data in the economic sciences and risk analysis; see, for instance, \cite{lange1989} and \cite{mcneil2015}. In particular,
 results on the moments of a Student $t$ distribution with $n$ degrees of freedom have been obtained %where provided 
 in \cite{winkler} where it was shown that the $j$-th order partial moment about the origin can be expressed in terms of the $(j-2)$-th order partial moment, provided that $j < n$. More recently, \cite{BernardiBPStudent} presented a recurrence formula by substituting repeatedly the partial moment of order $j-2$ in the characterization of the $j$-th order partial moment illustrated in \cite{TableofInt07}. %; see formulas 2.147(2) and 2.263(1). 
 In the literature, general expressions for central moments of skewed, truncated or folded Student $t$ distributions have also been derived by \cite{nadarajah2004skewed} and \cite{kim2008moments}, for example. However, relationships for partial and complete higher-order moments about any arbitrary point of a Student $t$ distribution have not yet been determined. %[DA COMPLETARE]
\\
This paper contributes to the existing literature in two ways. First, we show that for a Student $t$ distribution with $n $ degrees of freedom, the partial moment of order $n-j$ about any point $m\in\R$ can be expressed in terms of the partial moment of order $j-1$ times a multiplicative constant that depends on $m, n$ and $j\in\{1,\ldots, n\}$. %To get this result, we exploit fundamental properties of the gamma and hypergeometric functions. 
 In addition, closed form expressions for the complete moments %of a Student $t$ distribution 
 are also derived in terms of the Gamma and the hypergeometric functions. The proofs of these results involve concepts from combinatorial analysis as well as exploit fundamental properties of known special functions. %the Gamma and the (generalized) hypergeometric functions.  
 To the best of our knowledge, this is the first time that inter-order relationships for partial and complete moments of the Student $t$ distribution are investigated, giving a simple way for calculating higher order moments from those of lower order.
%Based on these results moments of higher order can promptly and easily be computed from moments of lower order. 
%facilitating the detertimation of 
% gives a simple way for calculating the moments.
%%Based on these results 
%making use of the following theorem
%We have a recursive relationship in which the partial moments of even (odd) order are functions of preceding partial moments of even (odd) orde
%of the symmetry relations  directly
%a characterization of the symmetry on 
%for the computation of moments
%We investigate the inter-order relationship of moments
%in terms of the moments
%From the authors’ best knowledge, no such set of formulae exist for (generalized) Student’s t-distributions. 
%We have derived an expression for moments of Student’s t order statistics as a finite sum of a known special function. This expression is the first known one in explicit and exact form.
%Again the following corollary is a restatement of the symmetry in Eq. (3.3), where one can see the behavior of the symmetry when 2! is odd or even

The second contribution of the paper establishes inter-order relations of equality between %concerns the evaluation of 
 $L_p$-quantiles of the Student $t$ distribution. In the literature, $L_p$-quantiles, introduced by \cite{Chen96} as a natural extension of quantiles, are an important class of statistical functionals defined as the minimizers of an expected asymmetric $p$-power loss function. Indeed, for $p=1$ and for $p=2$, they correspond respectively to quantiles and expectiles, which have been proposed by \cite{NeweyP87}, based on asymmetric least-squares estimation as a \qmo quantile-like\qmcsp generalization of the mean. Basic properties of the $L_p$-quantiles have been given by \cite{Chen96} and, in the context of generalized quantiles, \cite{Remillard95} provided sufficient conditions under which $L_1$-quantiles (quantiles) and $L_2$-quantiles (expectiles) coincide. %The properties of the Lp-quantiles, defined through an asymmetric Lfloss function, are derived
 More generally, both quantiles and expectiles, and in turn $L_p$-quantiles, can be embedded in the wider class of M-quantiles of \cite{BrecklingChambers88} which extend the ideas of M-estimation of \cite{huber1964robust} by introducing asymmetric influence functions to model the entire distribution of a random variable. Generalized quantile models have been implemented in a broad range of applications, such as multilevel modeling (\citealt{alfo2021m} and \citealt{merlo2021quantile}), nonparametric regression (\citealt{Pratesi09}), multivariate analysis (\citealt{merlo2022marginal}), %testing the symmetry of conditional distributions (\citealt{Chen96}), 
 economics and finance (\citealt{Gneiting11}, \citealt{Bellinietal14} and \citealt{daouia2019extreme}). In the latter context, %by means of Lp optimization.
%Recently, quantiles and generalized quantiles have gained an important role in finance and economics in a a risk management framework; see for instance, \cite{Gneiting11}, \cite{Heinrich13}, \cite{Bellinietal14} and \cite{daouia2019extreme}.%based on asymmetric least squares estimation, as an alternative to quantile regression and a natural extension of mean regression
%This method provides a “quantile-like” generalization of the mean regression based on inﬂuence functions, combining in a common framework the robustness and eﬃciency properties of quantiles and expectiles
%Recently, the property of minimizing a given loss function, named elicitability by \cite{Lambertetal08}, was found to be important in risk management in order to compare different forecasting procedures. For this reason, generalized quantiles, and elicitable statistical functionals in general, have been extensively investigated in the financial literature; see for instance, \cite{Gneiting11}, \cite{Heinrich13}, \cite{Bellinietal14} and \cite{daouia2019extreme}. 
%In this setting, 
 $L_p$-quantiles %represent an important class of risk measures so much that they 
have received particular consideration %in risk management 
as potential competitors to the most used risk measures in banking and insurance, namely Value at Risk and Expected Shortfall. %(VaR) (ES) defined as the quantile of a loss distribution at a pre-defined probability level and the  the average loss beyond VaR respectively. 
 Indeed, besides being elicitable (\citealt{Lambertetal08}), they possess several interesting properties in terms of risk measures (see for instance \citealt{bellini2012isotonicity} and \citealt{Bellinietal14}). Moreover, when $p=2$, $L_2$-quantiles are the only risk measure that is both coherent (\citealt{Artzner99}) and elicitable (please see also \cite{Bellinietal14} and \cite{ziegel2016coherence} for a detailed analysis of expectiles as a risk measure).

In this paper, by exploiting the results derived on partial moments we show that for a Student $t$ distribution with $n$ degrees of freedom, the $L_{n-j+1}$-quantile and the  $L_{j}$-quantile coincide for any $\tau\in(0,1)$ (and the same holds for any affine transformation). This result generalizes the one in \cite{Koenker93} who showed that the Student $t$ distribution with 2 degrees of freedom, or any affine transformation thereof, is the unique distribution where $L_1$-quantiles and $L_2$-quantiles match each other, and in \cite{BernardiBPStudent} who proved that the $L_n$-quantiles and $L_1$-quantiles coincide for any $\tau \in (0,1)$.

With this paper, we give new insights on the properties of the Student $t$ distribution and, at the same time, we contribute in the context of risk management to %In the risk management context, this work presents new relations of equality between $L_p$-quantiles, contributing towards 
 the framework of \cite{belliniDB17}, \cite{li2022pelve} and \cite{fiori2022generalized} for comparing general pairs of risk measures and determining when these coincide.\\

The rest of the paper is organized as follows. Section \ref{sec:bg} introduces the notation and the mathematical background that will be used in our analysis. In Section \ref{sec:t-distr} we present the first important result of the paper deriving inter-order formulas for partial and complete moments of arbitrary order for a Student $t$ distribution. Finally, in Section \ref{seq:lp-quant} we introduce the $L_p$-quantiles and provide a symmetry characterization of $L_p$-quantiles of the Student $t$ distribution. The proofs of the main results are collected in the \nameref{Appendix}.

\section{Mathematical background}\label{sec:bg}
We start by introducing some basic mathematical notation. Let $\mathbb{N}:=\{1,2,\ldots\}$ denote the set of positive integers, $\mathbb{N}_0=\N\cup \{0\}$ and $\R^+:=(0,+\infty)$ the positive real line; $n$ will always be  an element of $\N$. The binomial coefficient if defined by $\binom{p}{q}=\frac{p!}{q!(p-q)!},$ for $p,q\in\N_0$, where  $n!=n\cdot(n-1)\cdot\ldots\cdot 1,$ is the factorial with the convention $0!=1$. We denote $(x)_+=\max(x,0)$ and $(x)_-=\max(-x,0)$ the positive and negative part of $x\in\R$, respectively. Given an atomless probability space $(\Omega,\mathcal{F},\mathbb{P})$, $L^p:=L^p(\Omega,\mathcal{F},\mathbb{P})$ represents the space of random variables with finite $p$-moment, $p\in[1,\infty)$. $L^0:=L^0(\Omega,\mathcal{F},\mathbb{P})$ and $L^{\infty}:=L^\infty(\Omega,\mathcal{F},\mathbb{P})$ denote, respectively, the space of measurable and bounded random variables. Given a continuous real-valued random variable $Y:\Omega\to \R$, we denote $f_Y$ and $F_Y$ its density and distribution functions. To improve readability the subscript may be omitted when no confusion is likely to arise. % if no confusion is possible. 
%The raw and absolute moments of order $k\in \N_0$ are defined as:
%$$
%\E[Y^k]=\int_{-\infty}^{+\infty}y^k\rd F_{Y}(y),\qquad \E[|Y|^k]=\int_{-\infty}^{+\infty}|y|^k\rd F_{Y}(y),
%$$
%and provide important information about the random variable $Y$. Provided that the absolute $k$-th moment of $Y$ is finite, %, i.e., $\E[\mid Y^k \mid] < + \infty$,
{For $Y\in L^k(\Omega,\mathcal{F},\mathbb{P}),~k\in\N_{0}$, the upper and lower partial moments of order $k$ about any point $m\in\R$ are denoted as follows
\begin{equation}\label{eq:uppermom}
\E[((Y-m)_+)^{k}] = \int_{m}^{+\infty} (y-m)^k \textrm{d} F_Y (y)
\end{equation}
and
\begin{equation}\label{eq:lowermom}
\E[((Y-m)_-)^{k}] = \int_{-\infty}^m (m-k)^k \textrm{d} F_Y (y) = (-1)^k \int_{-\infty}^m (y-m)^k \textrm{d} F_Y (y).
\end{equation}}%upper and lower partial moments will be key elements in this work.

%In this paper we make extensive use of some special functions: the Gamma, the Beta and the hypergeometric functions that we briefly introduce here with their most relevant properties. These functions are widely discussed in every book on special functions.

In this paper we make extensive use of some special functions: the Gamma, the Beta and the hypergeometric functions. These functions are widely discussed in every book on special functions. Among many others we cite \cite{abramowitz1972handbook}, \cite{Askey75}, \cite{temme1996special} and \cite{mathai2008special}. Here, we briefly introduce the hypergeometric function with its most relevant properties.%extensively treated

To this end, we first recall the notion of Pochhammer's symbol:
$$
(x)_k:=(x+k-1)(x+k-2)\cdot\ldots\cdot(x+1)x, \quad x\in\mathbb{R},~k\in\N_0,
$$
{with the convention $(x)_0 = 1$.} Note that for $x\in\R^+$, one has $(x)_k=\frac{\Gamma(x+k)}{\Gamma(x)}$, where $\Gamma(x)$ denotes the Gamma function of the positive real number $x$. For $m\in\N_0$, it is immediate to verify
$$
(-m)_k=\left\{ 
\begin{array}{cc}
0\qquad&\textrm{if }k>m,\\
(-1)^k\frac{m!}{(m-k)!}\qquad&\textrm{if } k\leq m.
\end{array}%
\right. 
$$
 Then, the hypergeometric function ${_{2}}F_{1}(a,b,c;z)$ is defined by the series
\begin{equation}\label{eq:2f1}
{_{2}}F_{1}(a,b,c;z)=\sum_{k=0}^\infty\frac{(a)_k(b)_k}{(c)_k}\frac{z^k}{k!},
\end{equation}
with $c\not\in\{0,-1,-2,\dots\}$.
The radius of convergence of the series is the unity, but  it may be extended under weak conditions. For instance, when $a,b,c,z\in\R$ the analytic continuation of the hypergeometric function can be obtained when $c>b>0$ (or $c>a>0$) and $z<1$, see \cite{temme1996special}. The series terminates if either $a$ or $b$ is a nonpositive integer, in which case the function reduces to a polynomial {and the convergence is guaranteed for any $z\in\R$}. Indeed, when $a$ or $b$ is equal to $-m$, with $m \in\mathbb{N}_0$, the function ${_{2}}F_{1}(a,b,c;z)$ is a polynomial of degree $m$ in $z$, that is:
\begin{equation*}
{_{2}}F_{1}(-m,b,c;z)=\sum_{k=0}^m\frac{(-m)_k(b)_k}{(c)_k}\frac{z^k}{k!}=\sum_{k=0}^m (-1)^k\binom{m}{k}\frac{(b)_k}{(c)_k}z^k.
\end{equation*}

{The hypergeometric function satisfies a great variety of relations. First we observe that it is symmetric in $a$ and $b$, giving ${_{2}}F_{1}(a,b,c;z) = {_{2}}F_{1}(b,a,c;z)$. Another} useful property which will be often used throughout the paper is the following Euler's transformation (\citealt{temme1996special}, p. 110). %\citealt{abramowitz1972handbook}, p. 559. %for which the following Euler's transformation will be often used. 
For $c>a>0$ {(or equivalently $c>b>0$ from the symmetry in ${_{2}}F_{1}(a,b,c;z)$ with respect to $a$ and $b$) and $|\mbox{arg}(1-z)| < \pi$}:
\begin{equation}\label{Euler}
{_{2}}F_{1}(a,b,c;z)=(1-z)^{c-a-b}{_{2}}F_{1}(c-a,c-b,c;z),
\end{equation}
where $\mbox{arg}(z)$ denotes the argument of the real number $z$.

The last property of the hypergeometric function that we need is:
{\small
\begin{align}
&{_{2}{F}_1}\left(a,b,c;z\right)=\label{property}\\&
=\frac{\Gamma(c)\Gamma(b-a)}{\Gamma(b)\Gamma(c-a)}(-z)^{-a}{_{2}{F}_1}\left(a,a-c+1,a-b+1;\frac{1}{z}\right)+\frac{\Gamma(c)\Gamma(a-b)}{\Gamma(a)\Gamma(c-b)}(-z)^{-b}{_{2}{F}_1}\left(b-c-1,b,b-a+1;\frac{1}{z}\right),\notag
\end{align}
}for $a-b \notin \mathbb{Z}$, $|\mbox{arg}(-z)| < \pi$ and the Gamma functions well defined (%\citealt{abramowitz1972handbook}, p. 559; 
\citealt{temme1996special}, p. 120). 
In this contribution we will briefly encounter  the \textit{generalized hypergeometric function}  defined as
\begin{equation*}
{_{p}}F_{q}(\alpha_1,\alpha_2,\ldots,\alpha_p,\beta_1,\beta_2,\ldots,\beta_q;z)=\sum_{k=0}^\infty\frac{(\alpha_1)_k(\alpha_2)_k\cdot\ldots\cdot (\alpha_p)_k}{(\beta_1)_k(\beta_2)_k\cdot\ldots\cdot(\beta_q)_k}\frac{z^k}{k!},\qquad {\textrm{for } p,q\in\N, p\leq q+1.}
\end{equation*}
{The radius of convergence of this series is unity for $p=q+1$, while it is $\infty$ for $p<q+1$.} 
The parameters $\alpha_1,\alpha_2,\ldots,\alpha_p$ can be commuted as well as the parameters $\beta_1,\beta_2,\ldots,\beta_q$. Furthermore if $\alpha_j=\beta_k$ for some $j\in\{1,\ldots,p\}$ and 
$k\in\{1,\ldots,q\}$, then $${_{p}}F_{q}(\alpha_1,\ldots,\alpha_j,\ldots\alpha_p,\beta_1,\ldots,\beta_k,\ldots,\beta_q;z)={_{p-1}}F_{q-1}(\alpha_1,\ldots,\alpha_{j-1},\alpha_{j+1},\ldots\alpha_p,\beta_1,\ldots,\beta_{k-1},\beta_{k+1},\ldots,\beta_q;z).$$

\section{Relations between moments of the Student $t$ distribution}\label{sec:t-distr}
In this section we present innovative results for the partial and complete moments of the Student $t$ distribution. % recall various features of the Student $t$ distribution and present  new results for its complete and upper/lower partial moments. 
 Let $Y$ be a random variable with  Student $t$ distribution with $n\in\mathbb{N}$ degrees of freedom, i.e., $Y\in L^{n-1}(\Omega,\mathcal{F},\mathbb{P})$, with density function given by %is given by 
\begin{equation*}
f_{Y}(y)=\frac{\Gamma\left(\frac{n+1}{2}\right)}{\sqrt{n\pi}\Gamma\left(\frac{n}{2}\right)}\left(1+\frac{y^2}{n}\right)^{-\frac{n+1}{2}},\quad\textrm{where}\quad y\in\R.%
\end{equation*} 
%When $n$ is even $C_n$ can be written as $C_n={(n-1)!!}n^{-{1}/{2}}/(2(n-2)!!)$ where the symbol $!!$ denotes the double factorial defined as
%$$
%x!!=\left\{ 
%\begin{array}{cc}
%x\cdot(x-2)\cdot\ldots \cdot1\qquad&\textrm{for } x \textrm{ positive odd integer},\\
%x\cdot(x-2)\cdot\ldots \cdot2\qquad&\textrm{for } x \textrm{ positive even integer},
%\end{array}%
%\right. 
%$$
%with the convention that $0!!=0.$
In order to introduce our results, we recall the expressions for the moments of $Y$ centered around zero. Specifically, the raw moments are well defined for any  order $j\in \{0,\ldots, n-1\}$ and can be calculated as
\begin{equation}\label{eq:rawmoments}
 \E[Y^j]=\left\{ 
\begin{array}{cc}
0\qquad&\textrm{if } j \textrm{ is odd},\\
\frac{\Gamma\left(\frac{j+1}{2}\right)\Gamma\left(\frac{n-j}{2}\right)}{\sqrt{\pi}\Gamma\left(\frac{n}{2}\right)}n^{\frac{j}{2}}\qquad&\textrm{if } j \textrm{ is even}.%
\end{array}%
\right. 
\end{equation}
The raw moments of odd order are null because the density function $f_Y$ is an even function. %($f(-y)=f(y)$ for any $y\in\R$), i.e., its graph is symmetric with respect to the $y$ axis.
%\blue{In the following we derive closed forms and recursive formulas for the higher order moments of a Student $t$ distribution.}
 
 %We first provide an expression 
 In the following, a closed form expression for the complete moments of order $j\in\{0,\ldots,{n-1}\}$ centered in $m\in\mathbb{R}$ is derived. %for the random variable $Y$. 
 This result can be found in the working paper by \cite{kirkby19} based on the unpublished note by \cite{winkelbauer12}. Here, we derive the same formulas %We report it here, 
 with a different proof.
\begin{proposition}\label{Moments} Let $Y$ be a random variable with Student $t$ distribution with $n\in\mathbb{N}$ degrees of freedom. For  $j\in\{0,\ldots,{n-1}\}$ and $m\in\R$:
\begin{equation}\label{eq:cenmom}
\E[(Y-m)^j]=\left\{ 
\begin{tabular}{ll}
$-2mn^{\frac{j-1}{2}}\frac{\Gamma(\frac{j}{2}+1)\Gamma(\frac{n-j+1}{2})}{\sqrt{\pi}\Gamma(\frac{n}{2})}{_2}F_1\left(\frac{1-j}{2},\frac{n-j+1}{2},\frac{3}{2};-\frac{m^2}{n}\right)$& \qquad if $j~\textrm{is odd}$,\\
$n^{\frac{j}{2}}\frac{\Gamma(\frac{j+1}{2})\Gamma(\frac{n-j}{2})}{\sqrt{\pi}\Gamma(\frac{n}{2})}{_2}F_1\left(-\frac{j}{2},\frac{n-j}{2},\frac{1}{2};-\frac{m^2}{n}\right)$& \qquad if $j~\textrm{is even},$\\
\end{tabular}
\right. 
\end{equation}
where $_{2}{F}_1 (a, b, c; z)$ is the hypergeometric function defined in \eqref{eq:2f1}.
\end{proposition}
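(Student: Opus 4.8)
The plan is to reduce everything to the binomial expansion
\[
\E[(Y-m)^j]=\sum_{k=0}^{j}\binom{j}{k}(-m)^{j-k}\,\E[Y^k],
\]
and to feed in the raw moments \eqref{eq:rawmoments}. Since $\E[Y^k]=0$ whenever $k$ is odd, only even indices $k=2l$ survive, so the sum collapses to a finite sum over $l\in\{0,\dots,\lfloor j/2\rfloor\}$ carrying the explicit even moments $\E[Y^{2l}]=n^{l}\Gamma(l+\tfrac12)\Gamma(\tfrac n2-l)/(\sqrt\pi\,\Gamma(\tfrac n2))$. I would then split according to the parity of $j$, because this controls the surviving power of $m$ and hence which of the two formulae in \eqref{eq:cenmom} is produced.

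Writing $j=2r$ (even) or $j=2r+1$ (odd) and re-indexing the remaining sum by $k\mapsto r-k$ turns it into a polynomial in $m^2$ ordered by increasing powers of $m$. For $j=2r$ the $k=0$ term is exactly the claimed prefactor $n^{r}\Gamma(r+\tfrac12)\Gamma(\tfrac n2-r)/(\sqrt\pi\,\Gamma(\tfrac n2))$; for $j=2r+1$ the expansion carries a global factor $-m$, and its $k=0$ term reproduces, via $(2r+1)\Gamma(r+\tfrac12)=2\Gamma(r+\tfrac32)$, the stated prefactor together with its factor $2$. In either case $-\tfrac j2$ (respectively $\tfrac{1-j}2=-r$) is a nonpositive integer, so the target ${}_{2}F_{1}$ in \eqref{eq:cenmom} is the terminating polynomial of \eqref{eq:2f1}; it therefore suffices to match the coefficient of each power $(-m^2/n)^k$.

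Matching coefficients, after cancelling the common factor $\Gamma(\tfrac n2-r+k)=(\tfrac n2-r)_k\,\Gamma(\tfrac n2-r)$ and rewriting $(-r)_k=(-1)^k r!/(r-k)!$ together with the half-integer Pochhammer symbols, reduces to a purely combinatorial identity. In the even case this is
\[
\binom{2r}{2k}\Gamma\!\big(r-k+\tfrac12\big)=\Gamma\!\big(r+\tfrac12\big)\,\frac{r!\,\sqrt\pi}{(r-k)!\,\Gamma(k+\tfrac12)\,k!},
\]
and an analogous identity with $\binom{2r+1}{2k+1}$, $\Gamma(r+\tfrac32)$ and $\Gamma(k+\tfrac32)$ arises in the odd case. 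Both follow at once from Legendre's duplication formula in the form $\Gamma(\ell+\tfrac12)=\tfrac{(2\ell)!}{4^{\ell}\ell!}\sqrt\pi$: substituting it for every half-integer Gamma makes all factors of $\sqrt\pi$ and all powers of $4$ cancel, and both sides collapse to $(2r)!\,\sqrt\pi/(4^{r-k}(2k)!\,(r-k)!)$, respectively its odd analogue with $2r+1$ and $2k+1$. The hypothesis $j\le n-1$ keeps $\tfrac n2-r>0$, so the moments exist and every Gamma argument is admissible.

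I expect the only real difficulty to be bookkeeping rather than analysis: performing the re-indexing so that the $m$-independent (even) or $m$-linear (odd) term is extracted as precisely the stated prefactor, and then arranging the half-integer Gamma factors so that the duplication formula yields the clean cancellation above. Since the hypergeometric series terminates, there are no convergence or analytic-continuation issues to address, and once the combinatorial identity is isolated the remainder is mechanical.
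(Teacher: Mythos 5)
Your proposal is correct and follows essentially the same route as the paper's own proof: binomial expansion of $\E[(Y-m)^j]$, substitution of the raw moments \eqref{eq:rawmoments} with a parity split on $j$, and Legendre's duplication formula to reshape the half-integer Gamma factors into the Pochhammer ratios of the terminating ${_2}F_1$ polynomial. The only cosmetic differences are your reversed summation index (requiring the re-index $l\mapsto r-k$) and your phrasing of the last step as coefficient matching against the known series rather than assembling the hypergeometric sum directly, both of which are equivalent to the paper's computation.
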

\begin{proof}
%See Proof of Proposition \ref{Moments} in the Appendix.
See the Appendix \ref{Asec:Moments}.
\end{proof}
\begin{remark}Note that  $_{2}{F}_1 (a, b, c; 0)=1$, therefore Proposition \ref{Moments} for $m=0$ returns the expression for the raw moments of order $j$ provided in \eqref{eq:rawmoments}. Moreover, for both odd and even $j$ %in both cases $j$  odd or even 
 the first term of the hypergeometric function is {a nonpositive integer}, implying that {the function is a polynomial and the convergence is guaranteed for any $m\in\R$ and $n\in\mathbb{N}$}.\end{remark}
%Proposition 2 in \cite{BernardiBPStudent} provides the following expression for the partial moment of order $j$, for $0\leq j\leq p-1$, for a random variable $Y_n\sim t_n$. 
%
%\begin{align*}
%G_{j,Y}(m)=-C_n\left(1+\frac{m^2}{n}\right)^{\frac{1-n}{2}}\sum_{i=0}^{\lfloor\frac{j-1}{2}\rfloor}m^{j-1-2i}n^{i+1}\frac{(j-1)!!}{(j-1-2i)!!}{\prod_{k=1}^{i+1}\frac{1}{n-j-2+2k}}+F_Y(m) E[Y^j],
%\end{align*}
%for $0<j\leq n-1$ and $G_{0,Y}(m)=F_Y(m),$ where we recall that $\lfloor\cdot\rfloor$ represent the lower integer part of a number. In the following we will denote $K_n(m)=\left(1+\frac{m^2}{n}\right)^{\frac{1-n}{2}}$.

A useful property of upper and lower partial moments of the Student $t$ distribution that will be needed for the main theorems can be proved by the following lemma.
%The following lemma establishes a useful property that will be needed for the main theorems of the upper and lower partial moments about any arbitrary point $m\in\R$ of the Student $t$ distribution.
%The following lemma concerns the upper and lower partial moments \blue{about any arbitrary point $m\in\R$} of the Student $t$ distribution and will be needed for the main theorems.
%Some interesting proper t ies of X can be proved by the following lemma.
\begin{lemma}\label{lemma}Let $Y$ be a random variable with Student $t$ distribution with $n\in\mathbb{N}$ degrees of freedom and let $j\in\{1,\ldots,{n}\}$:
\begin{enumerate}\item[(1)] For $m\geq 0$ the following equation holds:
\begin{equation}\label{eq:mpos}
\E[((Y-m)_+)^{n-j}]=(m^2+n)^{\frac{n-2j+1}{2}}\E[((Y-m)_+)^{j-1}].
\end{equation}
\item[(2)] For $m<0$
\begin{equation}\label{eq:mneg}
\E[((Y-m)_-)^{n-j}]=(m^2+n)^{\frac{n-2j+1}{2}}\E[((Y-m)_-)^{j-1}].
\end{equation}

\end{enumerate}
\end{lemma}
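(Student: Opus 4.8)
The plan is to write both partial moments as explicit integrals against the Student $t$ density and to connect them through a single change of variables that maps the tail $[m,\infty)$ to itself while interchanging the exponents $n-j$ and $j-1$. Concretely, for $m\geq 0$ and any admissible order $k$,
\begin{equation*}
\E[((Y-m)_+)^{k}]=\frac{\Gamma\left(\frac{n+1}{2}\right)}{\sqrt{n\pi}\,\Gamma\left(\frac{n}{2}\right)}\,n^{\frac{n+1}{2}}\int_m^\infty (y-m)^{k}(n+y^2)^{-\frac{n+1}{2}}\,\td y,
\end{equation*}
so that, after cancelling the common constant, it suffices to establish the purely analytic identity
\begin{equation*}
\int_m^\infty (y-m)^{n-j}(n+y^2)^{-\frac{n+1}{2}}\,\td y=(m^2+n)^{\frac{n-2j+1}{2}}\int_m^\infty (y-m)^{j-1}(n+y^2)^{-\frac{n+1}{2}}\,\td y.
\end{equation*}

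The decisive step, which I expect to be the crux of the whole argument, is to introduce the Möbius involution
\begin{equation*}
t=\frac{my+n}{y-m},\qquad\textrm{equivalently}\qquad y=\frac{tm+n}{t-m}.
\end{equation*}
This map is a decreasing bijection of $(m,\infty)$ onto itself (with $y=m\leftrightarrow t=\infty$ and $y=\infty\leftrightarrow t=m$) and is its own inverse. A direct computation yields the three identities
\begin{align*}
y-m&=\frac{m^2+n}{t-m}, & n+y^2&=\frac{(m^2+n)(t^2+n)}{(t-m)^2}, & \td y&=-\frac{m^2+n}{(t-m)^2}\,\td t.
\end{align*}

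Substituting these into the left-hand integral, the orientation-reversal absorbs the minus sign coming from $\td y$, and then I would collect the powers of $(m^2+n)$ and of $(t-m)$ separately: the $(t-m)$ factors recombine to give exactly $(t-m)^{j-1}$, the factor $(t^2+n)^{-\frac{n+1}{2}}$ reappears untouched, and the surviving power of $(m^2+n)$ is precisely $(n-j)-\frac{n+1}{2}+1=\frac{n-2j+1}{2}$, which is the claimed prefactor. This proves part (1). For part (2) I would invoke the symmetry of the Student $t$ law: writing $Z=-Y\sim t_n$ and $m'=-m>0$, one has $(Y-m)_-=(Z-m')_+$, so applying part (1) to $Z$ at the nonnegative point $m'$ and using $(m')^2=m^2$ reproduces \eqref{eq:mneg} at once. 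The only genuine difficulty is guessing the substitution; once it is in hand the verification is routine algebra, and the integrability of both sides for $j\in\{1,\ldots,n\}$ is automatic since both orders $n-j$ and $j-1$ lie in $\{0,\ldots,n-1\}$.
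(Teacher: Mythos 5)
Your proof is correct, but it follows a genuinely different route from the paper's. The paper proves part (1) by citing the tabulated integral 3.254(2) of Gradshteyn--Ryzhik, which expresses $\int_m^{+\infty}(y-m)^{k}\,\td F_Y(y)$ through a ${_2}F_1$ function, and then obtains the inter-order relation by applying Euler's transformation \eqref{Euler} (plus the symmetry of ${_2}F_1$ in its first two arguments) to convert the order-$(n-j)$ representation into the order-$(j-1)$ one; part (2) is handled by the reflection $x=-y$ followed by the same table formula. Your argument replaces all of this special-function machinery with the self-inverse M\"obius substitution $t=\frac{my+n}{y-m}$ on the tail $(m,\infty)$; the three identities you record are exact algebraic facts, the bookkeeping of exponents is right (the $(t-m)$ powers combine to $-(n-j)+(n+1)-2=j-1$ and the $(m^2+n)$ powers to $\frac{n-2j+1}{2}$), and integrability holds since both orders $n-j$ and $j-1$ are at most $n-1$. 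Your reduction of part (2) to part (1) via $Z=-Y$, $m'=-m$ is the same reflection idea as the paper's, just phrased probabilistically. What your approach buys: it is elementary and self-contained (no integral tables, no hypergeometric identities — in effect you prove directly the one instance of Euler's transformation that is needed), and, since your substitution is a bijection of $(m,\infty)$ onto itself for \emph{every} real $m$, it actually yields the upper-partial-moment identity for all $m\in\R$ at once, i.e.\ Theorem \ref{th:momentspp} in full, without the paper's detour through complete moments for $m<0$. What the paper's approach buys: the intermediate hypergeometric representations \eqref{eq:zwillinger}--\eqref{eq:int} are not just a means to this lemma but are reused in the proof of Theorem \ref{th:moments} for $n$ even, where the connection formula \eqref{property} for ${_2}F_1$ is exploited; your substitution does not by itself produce those representations.
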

\begin{proof} 
%See Proof of Lemma \ref{lemma} in the Appendix.
For the proof see Appendix \ref{Asec:lemma}.
\end{proof}
Thanks to Lemma \ref{lemma}, in the following theorem we derive an interesting link between
 %\blue{the following result links} %we present a result that links 
 the central moments of order $n-j$ with that of order $j-1$ for any $m\in\R$.
\begin{theorem}\label{th:moments} Let $Y$ be a random variable with Student $t$ distribution with $n\in\mathbb{N}$ degrees of freedom. For $j\in\{1,\ldots,n\}$ and $m\in\R$, the following equation for the central moments about the point $m$ holds:
\begin{enumerate}\item For $n$ odd:
\begin{equation}\label{eq:moments}
\mathbb{E}[(Y-m)^{n-j}]=(m^2+n)^{\frac{n-2j+1}{2}} \mathbb{E}[(Y-m)^{j-1}].
\end{equation}
\item For $n$ even:
\begin{align}\label{eq:momentseven}
\mathbb{E}[(Y-m)^{n-j}]&={(m^2+n)^{\frac{n-2j+1}{2}}\left[-\mathbb{E}[(Y-m)^{j-1}]+2\mathbb{E}[((Y-m)_+)^{j-1}] \right]}.
\end{align}
%(m^2+n)^{\frac{n-2j+1}{2}}\left[\mathbb{E}[(Y-m)^{j-1}]-2 \int_{-\infty}^m (y-m)^{j-1}\textrm{d}F(y)\right]\notag\\
%&=\blue{(m^2+n)^{\frac{n-2j+1}{2}}\left[-\mathbb{E}[(Y-m)^{j-1}]+2 \int_m^{+\infty} (y-m)^{j-1}\textrm{d}F(y)\right]}\notag\\
\end{enumerate}
%\red{Per $j=n$, il risultato è corretto?} \blue{Si, perché sono coinvolti i momenti di $Y-m$ fino a quello di ordine $n-1$, che è ben definito.}
\end{theorem}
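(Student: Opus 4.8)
The plan is to reduce the statement about complete (central) moments to the partial-moment identity already recorded in Lemma \ref{lemma}. The bridge is the elementary pointwise identity $(Y-m)^k = ((Y-m)_+)^k + (-1)^k((Y-m)_-)^k$, valid because exactly one of the two parts is nonzero for each realization; taking expectations gives
\[
\E[(Y-m)^k] = \E[((Y-m)_+)^k] + (-1)^k\,\E[((Y-m)_-)^k].
\]
I would write this for the two orders $k=n-j$ and $k=j-1$, abbreviating $U_k:=\E[((Y-m)_+)^k]$, $N_k:=\E[((Y-m)_-)^k]$ and $C:=(m^2+n)^{\frac{n-2j+1}{2}}$, so that $\E[(Y-m)^k]=U_k+(-1)^kN_k$.

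The heart of the argument is that both partial moments obey the same scaling, namely $U_{n-j}=C\,U_{j-1}$ and $N_{n-j}=C\,N_{j-1}$, for every $m\in\R$. Lemma \ref{lemma} supplies the ``outer-tail'' instances directly: part (1) gives $U_{n-j}=C\,U_{j-1}$ for $m\ge0$, and part (2) gives $N_{n-j}=C\,N_{j-1}$ for $m<0$. The remaining ``inner-tail'' instances ($N$ for $m\ge0$ and $U$ for $m<0$) are obtained by repeating verbatim the change of variables underlying the lemma: the involution defined by $(y-m)(t-m)=m^2+n$ leaves each half-line $(m,\infty)$ and $(-\infty,m)$ invariant for every real $m$, sends $1+y^2/n$ to $(m^2+n)(1+t^2/n)/(t-m)^2$, and thereby expresses the order-$k$ partial moment on a half-line as $(m^2+n)^{k-\frac{n-1}{2}}$ times the order-$(n-1-k)$ one; taking $k=n-j$ (so $n-1-k=j-1$) yields exactly the claimed scaling. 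Alternatively, the missing cases follow from the symmetry $Y\stackrel{d}{=}-Y$, once one checks that the target formula at $-m$ implies it at $m$.

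With both identities in hand I would simply recombine:
\[
\E[(Y-m)^{n-j}]=U_{n-j}+(-1)^{n-j}N_{n-j}=C\bigl(U_{j-1}+(-1)^{n-j}N_{j-1}\bigr).
\]
The dichotomy between the two cases is now pure parity bookkeeping, driven by $(n-j)-(j-1)=n-2j+1$. When $n$ is odd this exponent is even, so $(-1)^{n-j}=(-1)^{j-1}$ and the bracket is exactly $\E[(Y-m)^{j-1}]$, yielding \eqref{eq:moments}. When $n$ is even the exponent is odd, so $(-1)^{n-j}=-(-1)^{j-1}$ and the bracket equals $U_{j-1}-(-1)^{j-1}N_{j-1}=2U_{j-1}-\bigl(U_{j-1}+(-1)^{j-1}N_{j-1}\bigr)=2\,\E[((Y-m)_+)^{j-1}]-\E[(Y-m)^{j-1}]$, yielding \eqref{eq:momentseven}.

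The only genuine obstacle is the two-tail issue flagged above: Lemma \ref{lemma} as stated pins down only one partial moment per sign of $m$, whereas the decomposition needs both at the same $m$. I expect this to be the step requiring the most care, and I would resolve it by the sign-free change of variables rather than by symmetry, since the substitution makes transparent that the restriction to $m\ge0$ (resp.\ $m<0$) in the lemma is merely cosmetic and that the scaling constant $(m^2+n)^{\frac{n-2j+1}{2}}$ is common to both half-lines.
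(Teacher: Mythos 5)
Your proof is correct, but it takes a genuinely different route from the paper's. The paper proves Theorem \ref{th:moments} with hypergeometric machinery: for $n$ odd it combines the closed forms of Proposition \ref{Moments} with Euler's transformation \eqref{Euler}; for $n$ even it additionally invokes Lemma \ref{lemma}, the tabulated integral 3.254(2), and the connection formula \eqref{property}, finishing by matching Gamma-function coefficients; only afterwards does it deduce the all-$m$ partial-moment identities (Theorem \ref{th:momentspp} and Corollary \ref{partial moments}) from Theorem \ref{th:moments} together with Lemma \ref{lemma}. You reverse this logical order: you first establish both scaling identities $\E[((Y-m)_+)^{n-j}]=C\,\E[((Y-m)_+)^{j-1}]$ and $\E[((Y-m)_-)^{n-j}]=C\,\E[((Y-m)_-)^{j-1}]$, with $C=(m^2+n)^{\frac{n-2j+1}{2}}$, for \emph{every} real $m$, and then obtain the theorem from the decomposition $\E[(Y-m)^k]=\E[((Y-m)_+)^k]+(-1)^k\E[((Y-m)_-)^k]$ and the parity of $n-2j+1$; that bookkeeping step is exactly right and reproduces both \eqref{eq:moments} and \eqref{eq:momentseven}. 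Your key tool, the involution $(y-m)(t-m)=m^2+n$, does what you claim: it preserves each half-line $(m,\infty)$ and $(-\infty,m)$, transforms the kernel via $1+\frac{y^2}{n}=\frac{(m^2+n)(1+t^2/n)}{(t-m)^2}$, and a short Jacobian computation converts the order-$k$ partial moment into $(m^2+n)^{k-\frac{n-1}{2}}$ times the order-$(n-1-k)$ one, with no restriction on the sign of $m$; setting $k=n-j$ gives the needed scaling. This route is more elementary and more unified than the paper's (no hypergeometric identities, no odd/even case split until the final parity count), and as a by-product it re-proves Lemma \ref{lemma}, Theorem \ref{th:momentspp} and Corollary \ref{partial moments} in one stroke; what it does not deliver are the explicit closed forms of Proposition \ref{Moments}, which the paper obtains as an intermediate product of independent interest. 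One caveat: your fallback suggestion that the missing inner-tail cases ``follow from the symmetry $Y\overset{d}{=}-Y$'' does not work, because symmetry gives $\E[((Y-m)_-)^k]=\E[((Y-(-m))_+)^k]$ and therefore merely exchanges the two unknown cases with each other (equivalently, it shows the statements at $m$ and at $-m$ are equivalent, but cannot produce either one from Lemma \ref{lemma}); since you rely on the substitution rather than on symmetry, the proof stands.
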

\begin{proof}For the proof see Appendix \ref{Asec:thmoments}.
\end{proof}
Theorem \ref{th:moments} provides an original inter-order relation between moments of the Student $t$ distribution. %In particular, we obtain that when $m=0$ the complete moment of order $n-j$ can be expressed in terms of the moment of order $j-1$ times a multiplicative constant that increases with $n$. 
\begin{remark}
It is worth noting that Theorem \ref{th:moments} also establishes an expression for the raw moments of $Y$. Indeed, for $m=0$, we have that
\begin{equation}\label{eq:cenmom2}
\E[Y^{n-j}]=\left\{ 
\begin{tabular}{ll}
$n^\frac{n-2j+1}{2} \E[Y^{j-1}]$& \qquad if $n$ and $j$ are odd,\\
$n^\frac{n-2j+1}{2} \E[|Y|^{j-1}]$& \qquad if $n$ and $j$ are even,\\
$0$ & \qquad otherwise.
\end{tabular}
\right. 
\end{equation}
\end{remark}

 In the next results, we show that a similar relation holds also for the upper and lower partial moments.
\begin{theorem}\label{th:momentspp}Let ${Y}$ be a random variable with Student $t$ distribution with $n\in\mathbb{N}$ degrees of freedom.
 For $j\in\{1,\ldots,n\}$ and $m\in\R$,  the following equation for the upper partial moments holds:
\begin{equation}\label{eq:momentsp}
\mathbb{E}[((Y-m)_+)^{n-j}]=(m^2+n)^{\frac{n-2j+1}{2}}\mathbb{E}[((Y-m)_+)^{j-1}].
\end{equation}

\end{theorem}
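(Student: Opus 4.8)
The plan is to split the argument according to the sign of $m$ and to reduce the genuinely new case $m<0$ to results already in hand by combining the symmetry of the Student $t$ density with the pointwise decomposition of a central moment into its upper and lower parts. First I would dispose of the range $m\geq 0$, where there is nothing to prove: the claim \eqref{eq:momentsp} restricted to nonnegative $m$ is literally part (1) of Lemma \ref{lemma}, i.e.\ equation \eqref{eq:mpos}. Thus the whole content of the theorem lies in the range $m<0$, which is exactly the range Lemma \ref{lemma} leaves open for the \emph{upper} partial moments, since there it only supplies the companion identity \eqref{eq:mneg} for the \emph{lower} partial moments.

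For $m<0$ the starting point is the elementary a.s.\ identity $(Y-m)^{k}=((Y-m)_+)^{k}+(-1)^{k}((Y-m)_-)^{k}$, valid for every integer $k$, which upon taking expectations (legitimate because $k\in\{j-1,\,n-j\}\subseteq\{0,\dots,n-1\}$ and $Y\in L^{n-1}$, so all these moments are finite) yields
\[
\E[(Y-m)^{k}]=\E[((Y-m)_+)^{k}]+(-1)^{k}\E[((Y-m)_-)^{k}].
\]
I would apply this relation for the two indices $k=n-j$ and $k=j-1$, so as to express the two upper partial moments appearing in \eqref{eq:momentsp} as complete moments minus lower partial moments. Into the resulting expressions I then substitute, on one side, Lemma \ref{lemma}(2), namely \eqref{eq:mneg}, which links the lower partial moment of order $n-j$ to that of order $j-1$ through the very factor $(m^{2}+n)^{\frac{n-2j+1}{2}}$, and, on the other side, Theorem \ref{th:moments}, which supplies the matching inter-order relation for the complete moments about $m$.

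At this point \eqref{eq:momentsp} reduces to a purely algebraic verification, and the parity of $n$ enters exactly where it does in Theorem \ref{th:moments}: for $n$ odd one invokes \eqref{eq:moments} and the computation closes after noting that $(-1)^{n-j}=(-1)^{j-1}$, whereas for $n$ even one invokes \eqref{eq:momentseven}, whose additional upper partial moment term is reabsorbed precisely because now $(-1)^{n-j}=-(-1)^{j-1}$. In both branches the sign that governs the cancellation is the parity of the exponent $n-2j+1$, which is even when $n$ is odd and odd when $n$ is even. I expect this sign bookkeeping, i.e.\ tracking the factors $(-1)^{n-j}$ and $(-1)^{j-1}$ and pairing them with the correct branch of Theorem \ref{th:moments}, to be the only delicate point; everything else is substitution. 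A conceptually equivalent route, should one prefer to keep $m\geq 0$ throughout, is to first apply the symmetry $Y\stackrel{d}{=}-Y$, which turns $\E[((Y-m)_+)^{k}]$ into the lower partial moment $\E[((Y-(-m))_-)^{k}]$ at the positive point $-m$; this recasts the statement as a lower partial moment identity at a positive argument, to be handled in the same way but with \eqref{eq:mpos} playing the role of \eqref{eq:mneg}.
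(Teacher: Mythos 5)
Your proposal is correct and takes essentially the same route as the paper's proof: the case $m\geq 0$ is exactly Lemma \ref{lemma}(1), and for $m<0$ you use the decomposition $\E[((Y-m)_+)^{k}]=\E[(Y-m)^{k}]-(-1)^{k}\E[((Y-m)_-)^{k}]$ combined with Lemma \ref{lemma}(2) and the two parity branches of Theorem \ref{th:moments}, which is precisely what the paper does. Your explicit sign bookkeeping ($(-1)^{n-j}=(-1)^{j-1}$ for $n$ odd, $(-1)^{n-j}=-(-1)^{j-1}$ for $n$ even, with the extra upper-partial-moment term in \eqref{eq:momentseven} reabsorbed) is accurate and simply spells out what the paper leaves implicit.
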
\begin{proof}
%the above expression can also be written as
%\begin{equation}\label{integralpp}
%\int_m^{+\infty}(y-m)^{n-j}\textrm{d}F_Y(y)=(m^2+n)^{\frac{n-2j+1}{2}}\int_m^{+\infty}(y-m)^{j-1}\textrm{d}F_Y(y).
%\end{equation}
For $m\geq 0$ the result follows from Lemma \ref{lemma}(1).
For $m<0$, we recall that the upper partial moment of order $ j\in\{0,\ldots,{n-1}\}$ can be  decomposed as:
$$
\mathbb{E}[((Y-m)_+)^{j}]=\mathbb{E}[(Y-m)^{j}]-(-1)^j\mathbb{E}[((Y-m)_-)^{j}].
$$
%$$
%\int_{m}^{+\infty}(y-m)^{j}\textrm{d}F_Y(y)=\int_{-\infty}^{+\infty}(y-m)^{j}\textrm{d}F_Y(y)-\int_{-\infty}^{m}(y-m)^{j}\textrm{d}F_Y(y).
%$$
The result follows from Lemma \ref{lemma}(2) together with  Theorem \ref{th:moments} (1) and (2) for the case $n$ odd and $n$ even, respectively.
\end{proof}
\begin{corollary}\label{partial moments}
Let $Y$ be a random variable with Student $t$ distribution with $n\in\mathbb{N}$ degrees of freedom.
 For  $j\in\{1,\ldots,n\}$ and $m\in\R$,  the following equation for the lower partial moments holds:
 \begin{equation}\label{eq:pmoments}
\E[((Y-m)_-)^{n-j}] = (m^2+n)^{\frac{n-2j+1}{2}}\E[((Y-m)_-)^{j-1}].
\end{equation}
\end{corollary}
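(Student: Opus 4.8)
The plan is to deduce the lower partial moment identity directly from Theorem \ref{th:momentspp} by exploiting the symmetry of the Student $t$ distribution about the origin, so that no new integration or combinatorial computation is required. Since the density $f_Y$ is even, the random variable $Z:=-Y$ has again a Student $t$ distribution with $n$ degrees of freedom, i.e.\ $Z\overset{d}{=}Y$. The idea is to rewrite every lower partial moment of $Y$ about a point $m$ as an upper partial moment of $Y$ about the reflected point $-m$, and then invoke Theorem \ref{th:momentspp}, which is valid for all $m\in\R$.

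First I would record the elementary pointwise identity $(x)_-=\max(-x,0)=(-x)_+$, which gives $((Y-m)_-)^k=((-(Y-m))_+)^k=((Z-(-m))_+)^k$ for every nonnegative integer $k$. Taking expectations and using $Z\overset{d}{=}Y$ then yields the crucial bridge
\begin{equation*}
\E[((Y-m)_-)^{k}]=\E[((Z-(-m))_+)^{k}]=\E[((Y-(-m))_+)^{k}],
\end{equation*}
so that the lower partial moment of $Y$ about $m$ equals the upper partial moment of $Y$ about $-m$.

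With this bridge in hand, the conclusion is immediate. Applying Theorem \ref{th:momentspp} with the point $-m\in\R$ in place of $m$, and using that $(-m)^2=m^2$, gives $\E[((Y-(-m))_+)^{n-j}]=(m^2+n)^{\frac{n-2j+1}{2}}\E[((Y-(-m))_+)^{j-1}]$; translating both sides back through the bridge identity recovers exactly \eqref{eq:pmoments}. The only point requiring care, and the closest thing to an obstacle, is the bookkeeping of signs: one must verify that reflecting $Y\mapsto -Y$ sends the threshold $m$ to $-m$ and the positive part to the negative part consistently, and that the multiplicative constant is invariant because it depends on $m$ only through $m^2$. Since the symmetry $Z\overset{d}{=}Y$ is exact and Theorem \ref{th:momentspp} already covers arbitrary real $m$, no further analytic input is needed and the argument is essentially a one-line reduction.
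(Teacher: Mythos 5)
Your proof is correct, but it takes a genuinely different route from the paper's. The paper deduces the corollary from Theorem \ref{th:moments} \emph{and} Theorem \ref{th:momentspp} combined, via the algebraic decomposition $\E[(Y-m)^j]=\E[((Y-m)_+)^j]+(-1)^j\E[((Y-m)_-)^j]$; solving this for the lower partial moment then requires a small amount of parity bookkeeping (the $(-1)^{n-j}$ and $(-1)^{j-1}$ factors only cancel correctly once one treats $n$ odd and $n$ even separately, using part (1) or part (2) of Theorem \ref{th:moments} accordingly). You instead exploit the reflection symmetry $-Y\overset{d}{=}Y$ of the Student $t$ density together with the pointwise identity $(x)_-=(-x)_+$, which converts the lower partial moment about $m$ into an upper partial moment about $-m$; Theorem \ref{th:momentspp} at the point $-m$ then finishes the argument since the constant depends on $m$ only through $m^2$. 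Your route needs only Theorem \ref{th:momentspp} (not Theorem \ref{th:moments}), avoids all sign-parity checks, and is arguably cleaner; its one limitation is that it leans on the evenness of the density, so it would not transfer to an asymmetric setting, whereas the paper's decomposition argument is purely algebraic given the two moment identities. It is also worth noting that your reflection trick is the random-variable-level analogue of the change of variable $x=-y$ that the paper uses inside the proof of Lemma \ref{lemma}(2), so the two approaches are cousins at a deeper level.
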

\begin{proof}%$$
%\int_{-\infty}^m (y-m)^{n-j}\textrm{d}F_Y(y) = (-1)^{n-1} (m^2+n)^{\frac{n-2j+1}{2}} \int_{-\infty}^m (y-m)^{j-1} \textrm{d}F_Y(y).
%$$
The proof follows from Theorem \ref{th:moments} and \ref{th:momentspp}, using the equality:
 $$\E[(Y-m)^j]=\E[((Y-m)_+)^j]+(-1)^j\E[((Y-m)_-)^j],\quad \textrm{for } j\in\{0,\ldots,{n-1}\}.$$
\end{proof}
From another perspective, Theorem \ref{th:momentspp} and Corollary \ref{partial moments} assert %From another point of view, we can say from Theorem \ref{th:momentspp} and Corollary \ref{partial moments} 
 that for a Student $t$ distribution with $n$ degrees of freedom the ratio between the upper (lower) partial moment of order $n-j$ and that of order $j-1$ is a deterministic function of $m$. Figure \ref{fig:constant} depicts the function $(m^2+n)^{\frac{n-2j+1}{2}}$ for various values of $n$ and $j\in\{1,\ldots,n\}$. As it can be observed, this is an even convex function of $m$ when the exponent $\frac{n-2j+1}{2}$ is positive and concave otherwise. We can also see that in both cases the global minimum (maximum) is at $m=0$. Evidently, if $j = \frac{n+1}{2}$ and $n$ is odd, this is a constant function equal to 1.
%there is a minimum (maximum) when $m=0$
%value of the function

\begin{figure}
\centering
\includegraphics[width=.495\textwidth]{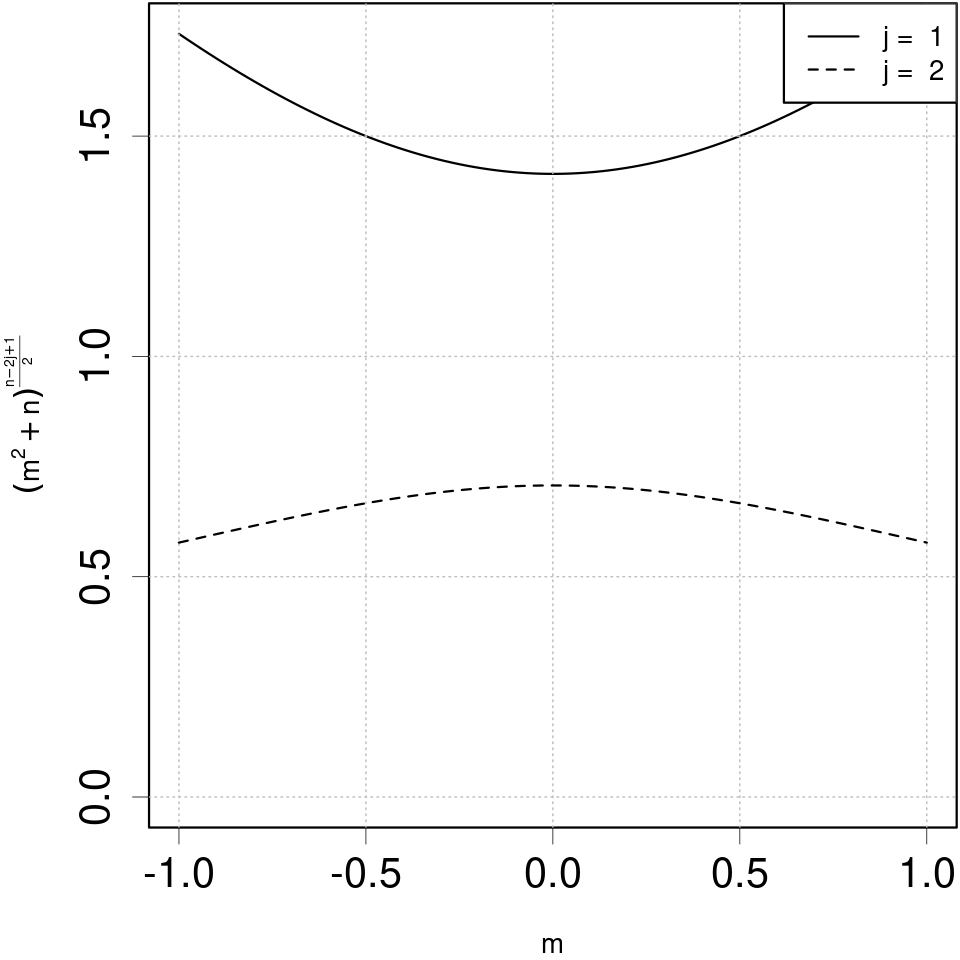}
\includegraphics[width=.495\textwidth]{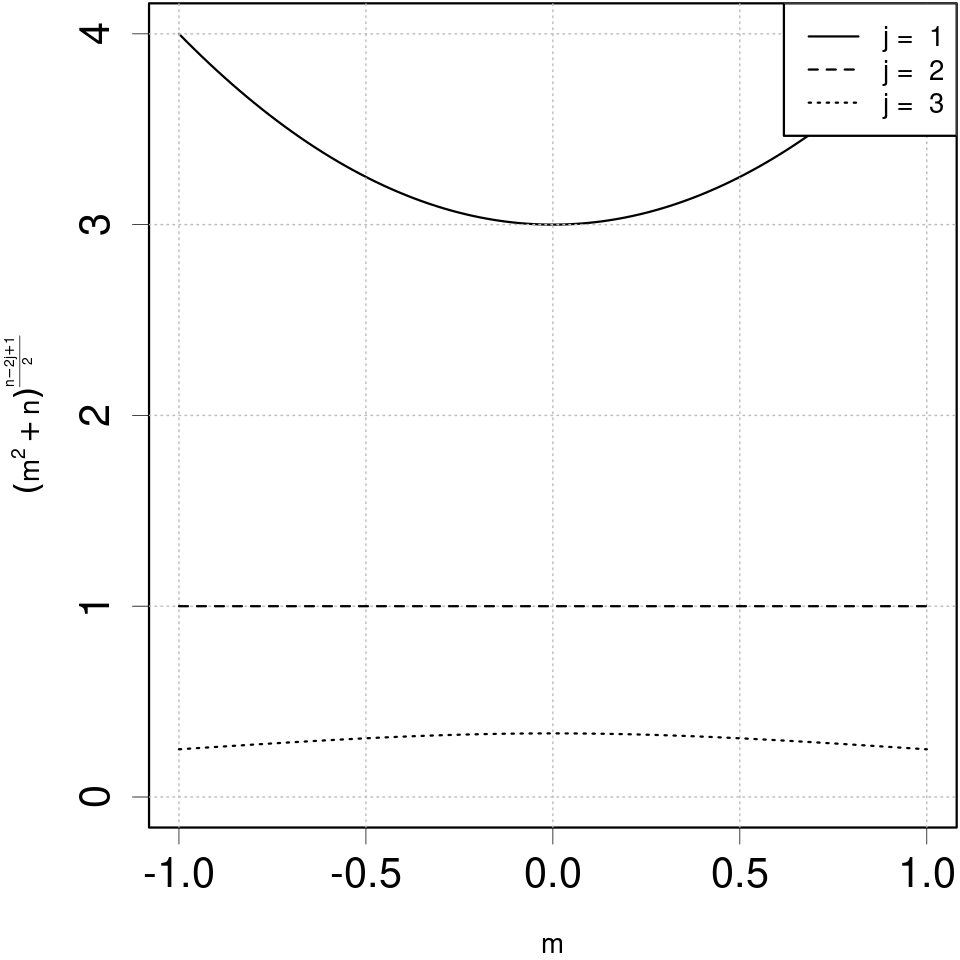}
\includegraphics[width=.495\textwidth]{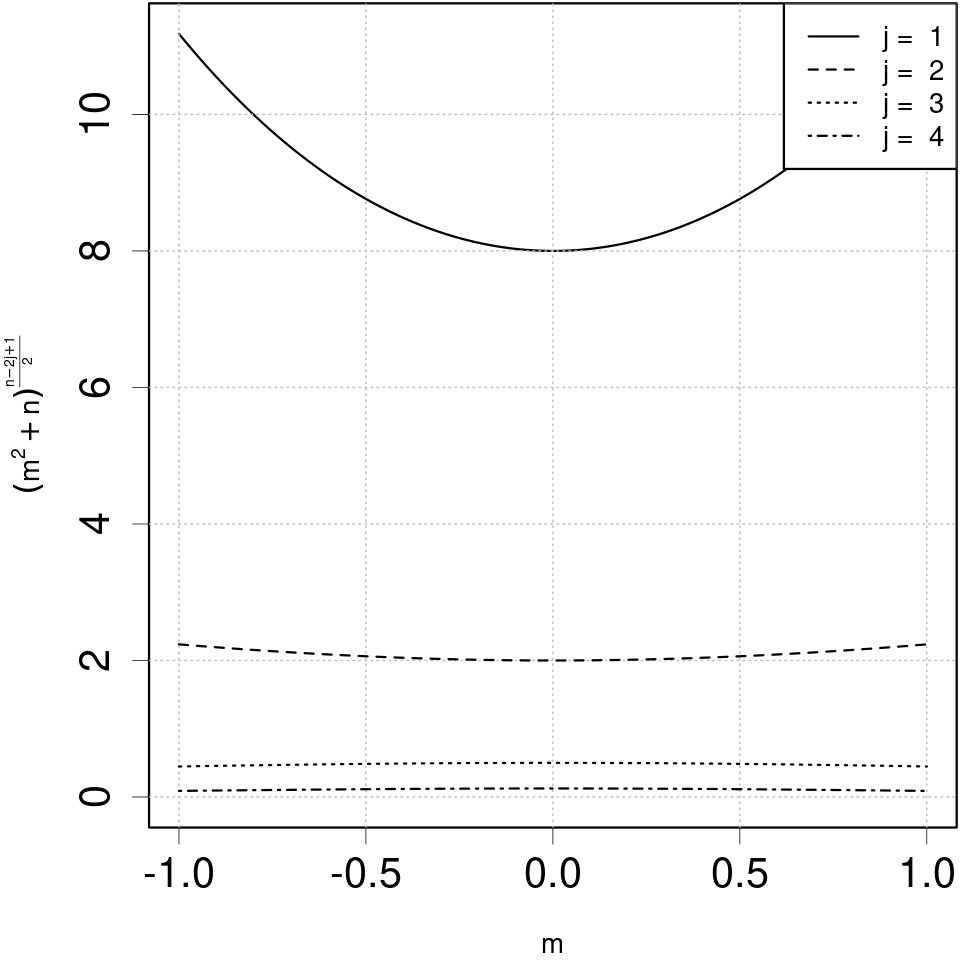}
\includegraphics[width=.495\textwidth]{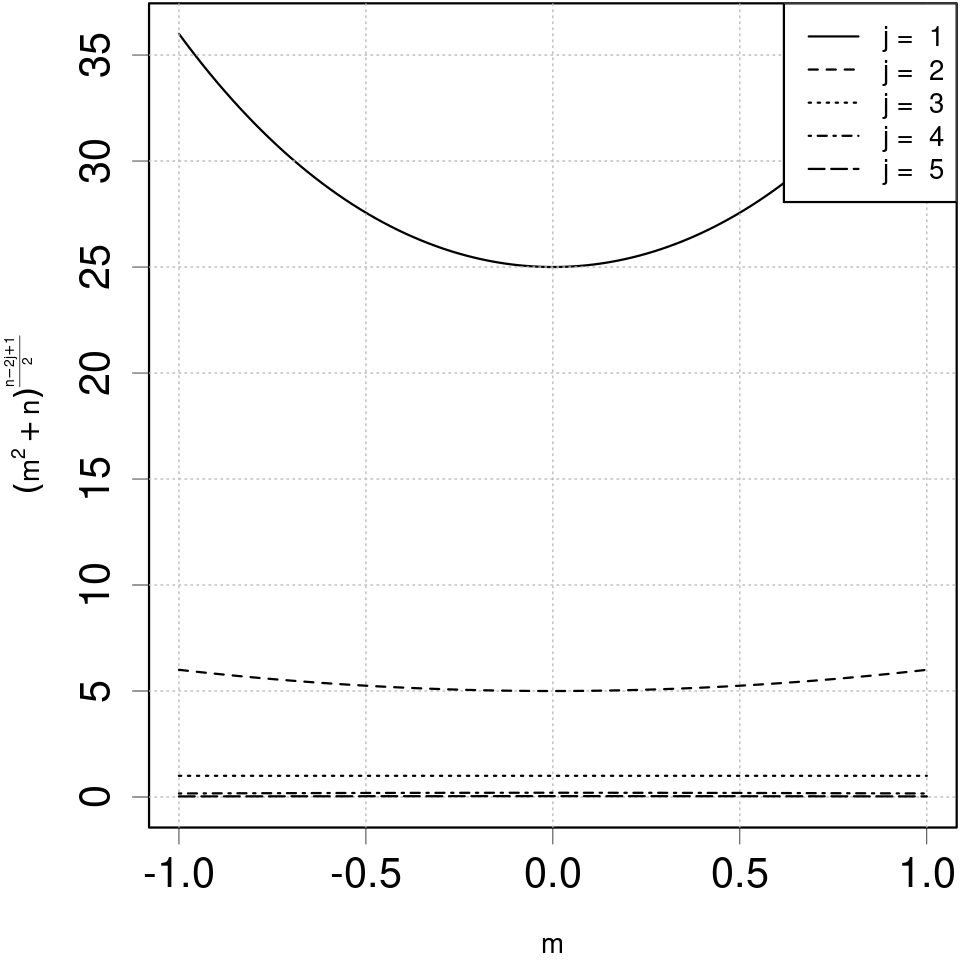}
\caption{\footnotesize From top to bottom and from left to right the function $(m^2+n)^{\frac{n-2j+1}{2}}$ for $n \in \{2,3,4,5\}$ degrees of freedom and $j\in\{1,\ldots,n\}$.}
\label{fig:constant}
\end{figure}

%\red{+++DA SPOSTARE+++Partial moments play an important role in several areas of applied mathematics such as insurance pricing or optimal decision theory; their properties and recursive equations have been investigated in the literature, see for instance \cite{winkler}, where the calculation of partial moments for different distributions is provided or \cite{abraham} where different continuous distributions are characterized by the properties of their partial moments.}
%\textcolor{blue}{Note that $E[((Y-m)_-)^{n-j}]=\int_{-\infty}^m (m-y)^{n-j} \textrm{d}F_Y(y)=\int_{-\infty}^m(-1)^{n-j} (y-m)^{n-j} \textrm{d}F_Y(y)$ is called \textit{lower partial moment} and Equation \eqref{eq:pmoments} can be rewritten as:
%\begin{equation}
%E[((Y-m)_-)^{n-j}] =  (m^2+n)^{\frac{n-2j+1}{2}}E[((Y-m)_-)^{j-1}].
%\end{equation}
%If we use the above representation, we should also get a result for the partial moments of the Student $t$ distribution $E[((Y-m)_+)^{j}]$ using the fact that $E[(Y-m)^j]=E[((Y-m)_+)^j]+(-1)^jE[((Y-m)_-)^j].$}

\section{On the $L_p$-quantiles for the Student $t$ distribution}\label{seq:lp-quant}
In this section we formally introduce the $L_p$-quantiles and present our second major contribution on identity relations between the $L_p$-quantiles of a Student $t$ distribution. %that represent an important class of statistical functionals and have received major attention in the recent literature on risk measurement, see for instance \cite{Bellinietal14}. 
 Specifically, consider the asymmetric power loss function $\ell_{p,\tau}: \mathbb{R}\to \mathbb{R}^+$, $\ell_{p,\tau}(x):=\tau(x_+)^p+(1-\tau)(x_-)^p$, where  $\tau\in(0,1)$. For $p\in\N$, the $L_p$-quantile of a random variable $Y$ at level $\tau\in(0,1)$, denoted by $\rho_{p,\tau}(Y)$, is defined as: 
\begin{equation}\label{eq:Lpquantilescore} 
\rho_{p,\tau}(Y):=\arg\min_{m\in\R}\E[\ell_{p,\tau}(Y-m)]=\arg\min_{m\in\R}\E[\tau((Y-m)_+)^p+(1-\tau)((Y-m)_-)^p],
\end{equation}
provided the expectation exists. For $p=1$, 
%Clearly for $p=1$,  
$\rho_{1,\tau}(Y)$ corresponds to the  quantile  at level $\tau$ of $Y$ and \eqref{eq:Lpquantilescore} has a unique solution only if the distribution of $Y$ is strictly increasing in a neighborhood of $\tau$. When $p=2$, $\rho_{2,\tau}(Y)$ corresponds to the $\tau$ level expectile introduced by~\cite{NeweyP87}. Expectiles have gained major attention in quantitative risk management as an important alternative to the well known VaR and ES risk measures. See for instance \cite{Emmer16} for a comparison of these three risk measures, \cite{Delbaen} and the reference therein for a characterization of expectiles as the unique example of a coherent elicitable risk measure and \cite{belliniDB17} for an empirical analysis of expectiles. Quantiles and expectiles with $\tau=\frac{1}{2}$ correspond respectively to the median and the mean of $Y$. 
 For $p\geq 2$, $L_p$-quantiles can be defined as the unique solution of the following first order condition: %(f.o.c.)
\begin{equation}\label{eq:Lpquantile}
 \tau \E\left[\left((Y-m)_+\right)^{p-1}\right]=(1-\tau)\E\left[\left((Y-m)_-\right)^{p-1}\right],\quad\tau\in(0,1).
\end{equation}
 {The main advantage of using~\eqref{eq:Lpquantile} is that it is well defined for random variables with $(p-1)$-th finite moments, while \eqref{eq:Lpquantilescore} requires also the $p$-th moment to be finite.}
 
%\blue{We note that when $\tau = \frac{1}{2}$, the $L_p$-quantile is the minimizer of the $L_p$ distance, indeed 
%\begin{equation}\label{eq:lp_dist}
%\begin{aligned}
%\rho_{p, \frac{1}{2}} &= \arg\min_{m\in\R}\E[\frac{1}{2}((Y-m)_+)^p+(1-\frac{1}{2})((Y-m)_-)^p]\\
% &= \arg\min_{m\in\R}\E[\mid (Y-m)^p \mid]^{\frac{1}{p}}.
%\end{aligned}
%\end{equation}
%From \eqref{eq:lp_dist}, it is easy to see that quantiles and expectiles with $\tau=\frac{1}{2}$ correspond respectively to the median and the mean of $Y$.}

% From Equation (6) in~\cite{BernardiBPStudent}, it is known that when $p$ is an odd number, then $\rho_{p,\tau}(Y)$ is the unique $m$ that satisfies
%\begin{equation}\label{eq:tauodd}
%\tau{=\frac{\E[((Y-m)_-)^{p-1}]}{\E[(Y-m)^{p-1}]}}.
%\end{equation} 
%%where $G_{j,Y}(\cdot)$ represents the truncated moment  of order $j$  defined as 
%%$$
%%G_{j,Y}(m)=\int_{-\infty}^my^jf_Y(y)\textrm{d}y.
%%$$ 
%Similarly, for $p$ even, $\rho_{p,\tau}(Y)$ is the unique $m$ that satisfies
%\begin{equation}\label{eq:tau}
%\tau{=\frac{\E[((Y-m)_-)^{p-1}]}{\E[(Y-m)^{p-1}]+2\E[((Y-m)_-)^{p-1}]}}.
%\end{equation} 

%In the next result 
In what follows, we show that for a random variable $Y$ having a Student $t$ distribution with $ n\in\mathbb{N}$ degrees of freedom the $L_{n-j+1}$-quantile and the $L_{j}$-quantile coincide for any level $\tau\in(0,1)$.

%For $j=1$, the result was already proved by \cite{BernardiBPStudent}, here we extend their results.

\begin{theorem}\label{mainth}
Let $Y$ be a Student $t$ random variable with $ n\in\mathbb{N} $ degrees of freedom. For $ j\in\{1,\ldots, n\}$ the following equation holds:
\begin{equation}\label{eq:equalitiLp}
\rho_{ n-j+1,\tau}(Y)=\rho_{j,\tau}(Y)\qquad\text{for any }\tau\in(0,1).
\end{equation}
\end{theorem}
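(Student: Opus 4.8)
The plan is to read off both $L_p$-quantiles from their defining first-order condition \eqref{eq:Lpquantile} and to observe that the inter-order relations of Theorem \ref{th:momentspp} and Corollary \ref{partial moments} turn one condition into the other up to a strictly positive multiplicative factor; since each $L_p$-quantile is the \emph{unique} solution of its defining condition, the two values must agree. First I would fix $j\in\{1,\ldots,n\}$ and note that both $n-j+1$ and $j$ lie in $\{1,\ldots,n\}$, so the partial moments of orders $(n-j+1)-1=n-j$ and $j-1$ that will appear are all finite because $Y\in L^{n-1}$.

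Assume first that $2\le j\le n-1$, so that both indices are $\ge 2$ and each quantile is characterized by \eqref{eq:Lpquantile}. Writing that condition for $\rho_{n-j+1,\tau}(Y)$ at a candidate point $m$ gives $\tau\,\E[((Y-m)_+)^{n-j}]=(1-\tau)\,\E[((Y-m)_-)^{n-j}]$. I would then substitute Theorem \ref{th:momentspp} on the left and Corollary \ref{partial moments} on the right, replacing each order-$(n-j)$ partial moment by $(m^2+n)^{\frac{n-2j+1}{2}}$ times the corresponding order-$(j-1)$ partial moment. Since $m^2+n\ge n>0$, the common factor $(m^2+n)^{\frac{n-2j+1}{2}}$ is strictly positive for every $m\in\R$ regardless of the sign of the exponent, and can therefore be cancelled unconditionally. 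What remains is exactly $\tau\,\E[((Y-m)_+)^{j-1}]=(1-\tau)\,\E[((Y-m)_-)^{j-1}]$, i.e. condition \eqref{eq:Lpquantile} for $\rho_{j,\tau}(Y)$. Hence the two conditions have the same solution set, and uniqueness of the solution of \eqref{eq:Lpquantile} yields \eqref{eq:equalitiLp}.

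It then remains to treat the boundary indices $j=1$ and $j=n$, which by the symmetry $j\leftrightarrow n-j+1$ both amount to comparing the $L_n$-quantile with the ordinary quantile $\rho_{1,\tau}(Y)$. Here the same substitution applies once the order-zero partial moments are read as tail probabilities, $\E[((Y-m)_+)^0]=\probp(Y>m)$ and $\E[((Y-m)_-)^0]=\probp(Y<m)$: applying Theorem \ref{th:momentspp} and Corollary \ref{partial moments} with $j=1$ turns the first-order condition for $\rho_{n,\tau}(Y)$ into $\tau\,\probp(Y>m)=(1-\tau)\,\probp(Y<m)$, which for the continuous, strictly increasing Student $t$ law is equivalent to $F_Y(m)=\tau$, the defining equation of $\rho_{1,\tau}(Y)$. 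The computation is otherwise immediate, so I expect the only delicate point to be this boundary case $p=1$, where the loss $\ell_{1,\tau}$ is nonsmooth and the first-order condition must be handed off to the quantile characterization $F_Y(m)=\tau$; everywhere else the inter-order relations do all the work and the positivity of $(m^2+n)^{\frac{n-2j+1}{2}}$ makes the cancellation automatic.
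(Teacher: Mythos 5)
Your proof is correct and follows essentially the same route as the paper's: start from the first-order condition for $\rho_{n-j+1,\tau}(Y)$, substitute the inter-order relations of Theorem \ref{th:momentspp} and Corollary \ref{partial moments}, cancel the strictly positive factor $(m^2+n)^{\frac{n-2j+1}{2}}$, and invoke uniqueness of the solution. The only difference is presentational: you isolate the boundary cases $j\in\{1,n\}$ and hand them off explicitly to the quantile characterization $F_Y(m)=\tau$, whereas the paper treats all $j$ uniformly (after dismissing $n=1$ as trivial) by implicitly reading the order-zero partial-moment condition $\tau\,\probp(Y>m)=(1-\tau)\,\probp(Y<m)$ as the defining equation of the quantile --- a point your version makes more carefully, since \eqref{eq:Lpquantile} is only stated in the paper for $p\geq 2$.
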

\begin{proof} For $n=1$ there is nothing to prove. For $n\geq 2$ and any $\tau\in(0,1)$, the first order condition \eqref{eq:Lpquantile} says that  $\rho_{n -j+1,\tau}(Y)$ is the unique solution to
\begin{align}
 \tau \E\left[\left((Y-m)_+\right)^{n-j}\right]&=(1-\tau)\E\left[\left((Y-m)_-\right)^{n-j}\right]\notag\\
 &\Downarrow\notag\\
 \tau(m^2+n)^{\frac{n-2j+1}{2}}  \E\left[\left((Y-m)_+\right)^{j-1}\right]&=(1-\tau)(m^2+n)^{\frac{n-2j+1}{2}} \E\left[\left((Y-m)_-\right)^{j-1}\right]\notag\\
  &\Downarrow\notag\\
\tau \E\left[\left((Y-m)_+\right)^{j-1}\right]&=(1-\tau)\E\left[\left((Y-m)_-\right)^{j-1}\right],\label{focj}
\end{align}
where in the second step we used Theorem \ref{th:momentspp} and Corollary \ref{partial moments}. 
However, \eqref{focj} is satisfied if and only if $m=\rho_{j,\tau}(Y)$, from which we obtain the desired result {\mbox{$\rho_{n-j+1,\tau}(Y)=\rho_{j,\tau}(Y)$.}}
\end{proof}
From Theorem \ref{mainth} follows an important property of the $L_n$-quantiles of a Student $t$ distribution. Specifically, the $L_{n-j+1}$-quantile not only minimizes the asymmetric power loss function of order $n-j+1$ but it also minimizes the one of order $j$ for $ j\in\{1,\ldots, n\}$ and for any $\tau \in (0,1)$. Their symmetry characterization is reflected by the fact that, for a given number of degrees of freedom $n$, the $L_1$-quantile (quantile) and $L_n$-quantile coincide, the $L_2$-quantile (expectile) tallies with the $L_{n-1}$-quantile, and so forth for every pair of $L_{j}$- and $L_{n-j+1}$-quantiles. %%the $L_{}$-quantile coincides with the
%This result is now

In the next corollary we extend this result to any affine transformation of the Student $t$ distribution.

\begin{corollary}\label{affine transformation}
Let $X$ be any affine transformation of a random variable $Y$ with Student $t$ distribution with $n\in\mathbb{N}$ degrees of freedom, that is $X \overset{d}{=} a + bY, a \in \R, b > 0$. For any $j\in\{1,\ldots, n\}$,
$$\rho_{ n -j+1,\tau}(X)=\rho_{j,\tau}(X)\qquad\text{for any }\tau\in(0,1).$$
\end{corollary}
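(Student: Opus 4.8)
The plan is to reduce the statement to Theorem \ref{mainth} by exploiting the affine equivariance of the $L_p$-quantile functional. Concretely, I would first establish that for every $p\in\N$, every $\tau\in(0,1)$, every $a\in\R$ and every $b>0$ one has $\rho_{p,\tau}(a+bY)=a+b\,\rho_{p,\tau}(Y)$, and then apply this identity to the two orders $n-j+1$ and $j$ at once.

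To prove the equivariance I would perform a change of variables in the objective of \eqref{eq:Lpquantilescore}. Writing $m=a+bm'$, with $m'$ ranging over $\R$ as $m$ does, the shifted argument becomes $a+bY-m=b(Y-m')$. Since $b>0$, the positive and negative parts factor as $(b(Y-m'))_+=b(Y-m')_+$ and $(b(Y-m'))_-=b(Y-m')_-$, so that $\ell_{p,\tau}(a+bY-m)=b^{p}\,\ell_{p,\tau}(Y-m')$ pointwise. Taking expectations gives $\E[\ell_{p,\tau}(X-m)]=b^{p}\,\E[\ell_{p,\tau}(Y-m')]$, and because $b^{p}>0$ is a constant and $m\mapsto m'=(m-a)/b$ is a bijection of $\R$, the minimizer in $m$ is exactly the image under $m'\mapsto a+bm'$ of the minimizer in $m'$. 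This yields $\rho_{p,\tau}(X)=a+b\,\rho_{p,\tau}(Y)$. For $p\geq 2$ one may argue identically through the first order condition \eqref{eq:Lpquantile}, whose two sides are each rescaled by $b^{p-1}$ and hence whose unique root in $m$ corresponds to the unique root in $m'$; for $p=1$ the same reasoning (or the classical equivariance of quantiles under increasing affine maps, unambiguous here since $X$ has a strictly increasing continuous distribution) applies.

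With equivariance in hand the conclusion is immediate: for $X\overset{d}{=}a+bY$ and any $j\in\{1,\ldots,n\}$,
\begin{equation*}
\rho_{n-j+1,\tau}(X)=a+b\,\rho_{n-j+1,\tau}(Y)=a+b\,\rho_{j,\tau}(Y)=\rho_{j,\tau}(X),
\end{equation*}
where the central equality is precisely Theorem \ref{mainth}. All quantities are well defined, since $Y\in L^{n-1}$ forces $X\in L^{n-1}$ and the orders $n-j$ and $j-1$ entering the partial moments never exceed $n-1$.

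I do not expect a genuine obstacle: the mathematical content lives entirely in Theorem \ref{mainth}, and the only point requiring care is the sign condition $b>0$, which is exactly what lets $(\cdot)_+$ and $(\cdot)_-$ transform without swapping. For $b<0$ the two parts would interchange and the effective level would become $1-\tau$, which is precisely why the hypothesis $b>0$ is imposed.
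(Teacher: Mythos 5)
Your proposal is correct and follows essentially the same route as the paper: the paper's proof likewise reduces the corollary to Theorem \ref{mainth} via the affine equivariance $\rho_{p,\tau}(a+bY)=a+b\,\rho_{p,\tau}(Y)$ for $b>0$, which it states as immediate. Your only addition is to spell out the change-of-variables verification of that equivariance (including the sign condition on $b$), which the paper leaves implicit.
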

\begin{proof}
The proof follows immediately by noting that $\rho_{ n -j+1,\tau}(a + bY)= a + b \rho_{n -j+1,\tau}(Y)$ for any $a \in \R,~ b > 0,~\tau\in(0,1)$ and $j\in\{1,\ldots, n\}$.
\end{proof}
%\begin{remark}\label{quantile}
Theorem \ref{mainth} and Corollary \ref{affine transformation} extend the work of \cite{Koenker92} who originally showed that the class of distributions for which the $L_2$-quantiles and $L_1$-quantiles coincide, corresponds to a rescaled Student $t$ distribution with 2 degrees of freedom. Moreover, in the special case $j=1$, the equality in Theorem \ref{mainth} reduces to the expression derived in \cite{BernardiBPStudent}, which shows that the $L_n$-quantile and the quantile coincide for any level $\tau\in(0,1)$. With our results we further generalize these works by providing a characterization of the symmetry of the $L_p$-quantiles for the Student $t$ distribution with $n$ degrees of freedom for any $p\in\{1,\ldots,n\}$.\\

 Figure \ref{fig:Lp} illustrates the behavior of the $L_j$-quantile for $j \in \{1,\dots,n\}$ of a Student $t$ distribution with $n=3$ (left) and $n=4$ (right) degrees of freedom with respect to $\tau \in (0,1)$. It is worth noticing that the $L_p$-quantiles are monotonically increasing, with a symmetry point for $\tau = \frac{1}{2}$ due to the symmetry of the Student $t$ centered around zero.
%Firstly, we observe a symmetry point for $\tau = \frac{1}{2}$ because of the symmetry of the Student $t$ centered around zero. 
 Moreover, by looking at the plot on the left, %one can clearly notice that 
 it is possible to see that $\rho_{1,\tau}$ (green) coincides with $\rho_{3,\tau}$ (red) for all $\tau\in(0, 1)$. Similarly, in the right-hand picture we have that $\rho_{1,\tau}$ (green) is equal to $\rho_{4,\tau}$ (pink) and $\rho_{2,\tau}$ (blue) coincides with $\rho_{3,\tau}$ (red) for all $\tau\in(0, 1)$.
%we only see two lines because quantiles (green) coincide with $\rho_{3,\tau}$ (red) for all $\tau\in(0, 1)$ as shown by the bi-colored dashed line. Similarly, in the right-hand picture we have that quantiles (green) are equal to $\rho_{4,\tau}$ (pink) and expectiles (blue) coincide with $\rho_{3,\tau}$ (red) for all $\tau\in(0, 1)$.

\begin{figure}
\centering
\includegraphics[width=.495\textwidth]{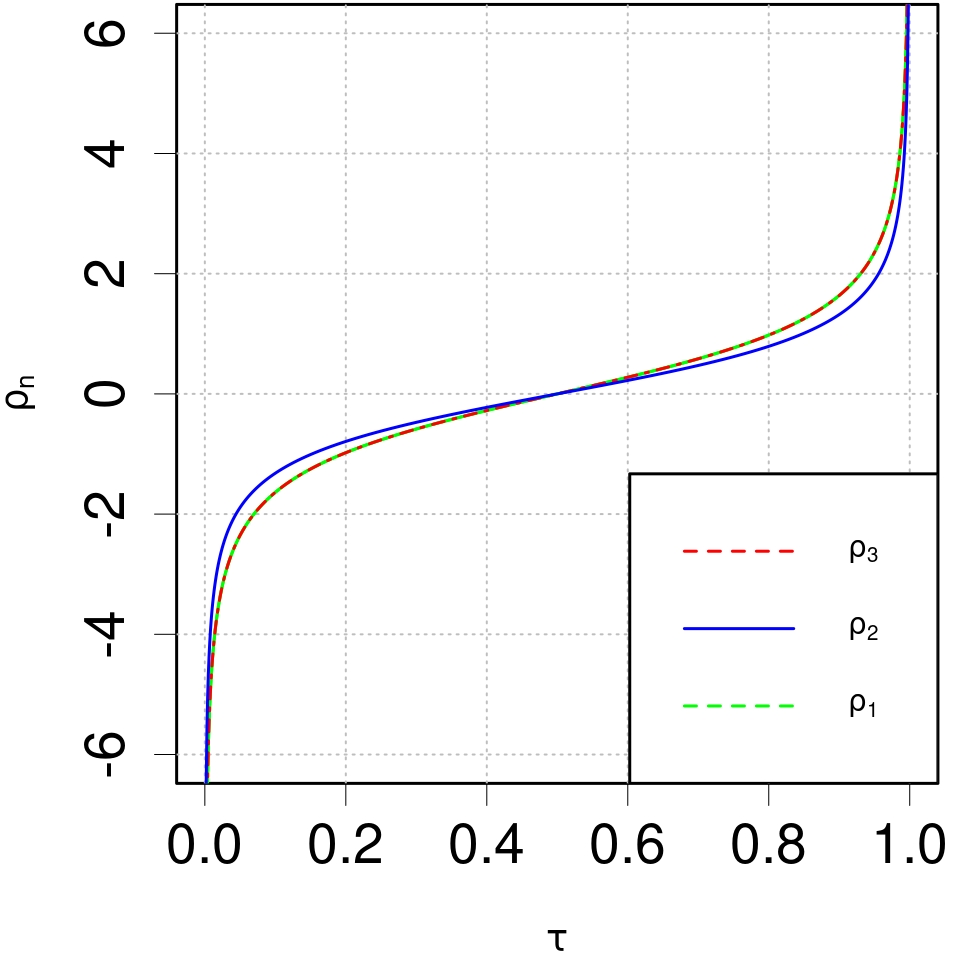}
\includegraphics[width=.495\textwidth]{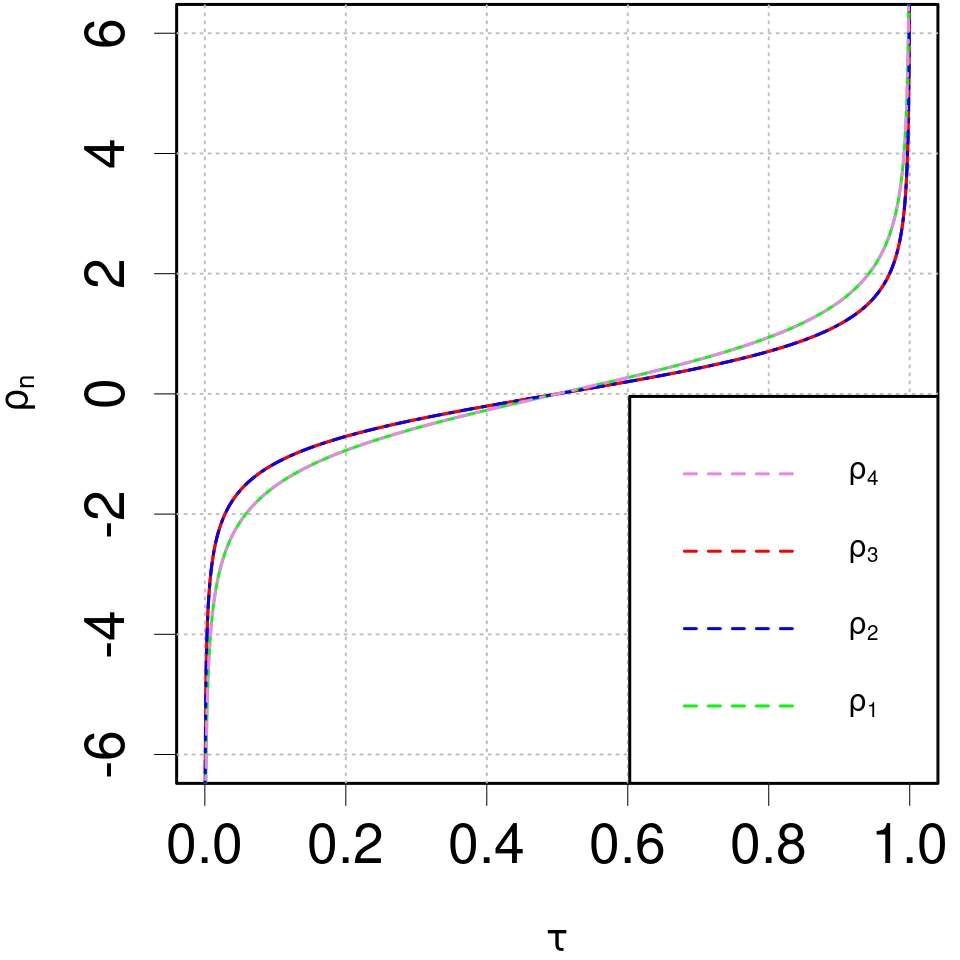}
\caption{\footnotesize $L_j$-quantiles for $j \in \{1,\dots,n\}$ of a Student $t$ distribution with $n=3$ (left) and $n=4$ (right) degrees of freedom with respect to $\tau \in (0,1)$. In both plots, the bi-colored dashed lines indicate that the $L_{n-j+1}$-quantile coincides with the $L_j$-quantile for $j \in \{1,\dots,n\}$ and for all $\tau\in(0, 1)$.}
\label{fig:Lp}
\end{figure}

\appendix
\section[Appendix]{Appendix}\label{Appendix}
\subsection{Proof Proposition \ref{Moments}}\label{Asec:Moments}
\textbf{Proposition \ref{Moments}} Let $Y$ be a random variable with Student $t$ distribution with $n\in\mathbb{N}$ degrees of freedom. For  $j\in\{0,\ldots,{n-1}\}$ and $m\in\R$:
%\label{eq:cenmom}
\begin{equation}
\E[(Y-m)^j]=\left\{ 
\begin{tabular}{ll}
$-2mn^{\frac{j-1}{2}}\frac{\Gamma(\frac{j}{2}+1)\Gamma(\frac{n-j+1}{2})}{\sqrt{\pi}\Gamma(\frac{n}{2})}{_2}F_1\left(\frac{1-j}{2},\frac{n-j+1}{2},\frac{3}{2};-\frac{m^2}{n}\right)$& \qquad if $j~\textrm{is odd}$,\\
$n^{\frac{j}{2}}\frac{\Gamma(\frac{j+1}{2})\Gamma(\frac{n-j}{2})}{\sqrt{\pi}\Gamma(\frac{n}{2})}{_2}F_1\left(-\frac{j}{2},\frac{n-j}{2},\frac{1}{2};-\frac{m^2}{n}\right)$& \qquad if $j~\textrm{is even},$\\
\end{tabular}
\right. 
\end{equation}
where $_{2}{F}_1 (a, b, c; z)$ is the hypergeometric function defined in \eqref{eq:2f1}.
\begin{proof}
By using the binomial expansion and the linearity of the expectation we can decompose the term $\mathbb{E}{[(Y-m)^{j}]}$, as:
$$
\mathbb{E}{[(Y-m)^{j}]}=\sum_{k=0}^{j}\binom{j}{k} \E[Y^{j-k}](-m)^{k}.
$$ 
{Note that for $j\in\{0,2,\ldots,n-1\}$ and $k\in\{0,\ldots,j\}$ all the moments $ \E[Y^{j-k}]$ are finite.} 
We first assume that $j$ is even. The term $ \E[Y^{j-k}]$ is not zero only for $k$ even, therefore  the $k$ index is changed  $k\mapsto 2k$ to obtain: 
$$
\mathbb{E}{[(Y-m)^{j}]}=\sum_{k=0}^{\frac{j}{2}}\binom{j}{2k} \E[Y^{j-2k}](-m)^{2k}.
$$
One can now substitute the values of $\E[Y^{j-2k}]$ from \eqref{eq:rawmoments} and express the binomial coefficient in terms of the Gamma function to get:
\begin{align}\mathbb{E}{[(Y-m)^{j}]}&=\sum_{k=0}^{\frac{j}{2}}\frac{\Gamma(j+1)}{\Gamma(2k+1)\Gamma(j-2k+1)} \frac{\Gamma(\frac{j-2k+1}{2})\Gamma(\frac{n-j+2k}{2})}{\sqrt{\pi}\Gamma(\frac{n}{2})}n^{\frac{j}{2}}\left(\frac{m^2}{n}\right)^{k}\notag\\
&=\frac{n^{\frac{j}{2}}\Gamma(\frac{n-j}{2})}{\sqrt{\pi}\Gamma(\frac{n}{2})}\sum_{k=0}^{\frac{j}{2}}\frac{\Gamma(j+1)}{\Gamma(2k+1)\Gamma(j-2k+1)} \frac{\Gamma(\frac{j-2k+1}{2})\Gamma(\frac{n-j}{2}+k)}{\Gamma(\frac{n-j}{2})}\left(\frac{m^2}{n}\right)^{k},\label{eq:jeven}
\end{align}
where in the last equation we multiplied and divided by $\Gamma(\frac{n-j}{2})$. One can now apply the Legendre duplication formula for the Gamma function, $\Gamma(z)\Gamma(z+\frac{1}{2})=2^{1-2z}\sqrt{\pi}\Gamma(2z),~z\in\mathbb{R}^+$, to the following terms:
\begin{align*}
&\frac{\Gamma(\frac{j-2k+1}{2})}{\Gamma(j-2k+1)}=\frac{2^{-j+2k}\sqrt{\pi}}{\Gamma(\frac{j-2k}{2}+1)};\\
&\Gamma(j+1)=\frac{\Gamma(\frac{j+1}{2})\Gamma(\frac{j}{2}+1)}{2^{-j}\sqrt{\pi}};\\
&\Gamma(2k+1)=\frac{{\Gamma(k+\frac{1}{2})\Gamma(k+1)}}{2^{-2k}\sqrt{\pi}}.
\end{align*}
By substituting the above formulas in \eqref{eq:jeven}, recalling the definition of the Pochhammer symbol $(x)_k=\frac{\Gamma(x+k)}{\Gamma(x)}$ and that $\sqrt{\pi}=\Gamma(\frac{1}{2})$, one gets
\begin{align*}
\E[(Y-m)^{j}]&=\frac{n^{\frac{j}{2}}\Gamma(\frac{n-j}{2})}{\sqrt{\pi}\Gamma(\frac{n}{2})}\sum_{k=0}^{\frac{j}{2}}\frac{\Gamma(\frac{j+1}{2})\Gamma(\frac{j}{2}+1)}{\frac{{\Gamma(k+\frac{1}{2})\Gamma(k+1)}}{\Gamma(\frac{1}{2})}\Gamma(\frac{j}{2}-k+1)}\frac{\Gamma(\frac{n-j}{2}+k)}{\Gamma(\frac{n-j}{2})}(-1)^k\left(-\frac{m^2}{n}\right)^{k}\\
&=\frac{n^{\frac{j}{2}}\Gamma(\frac{j+1}{2})\Gamma(\frac{n-j}{2})}{\sqrt{\pi}\Gamma(\frac{n}{2})}\sum_{k=0}^{\frac{j}{2}}(-1)^k\binom{\frac{j}{2}}{k}\frac{\left(\frac{n-j}{2}\right)_k}{\left(\frac{1}{2}\right)_k}\left(-\frac{m^2}{n}\right)^{k}\\
&= n^{\frac{j}{2}} \frac{\Gamma(\frac{n-j}{2})}{\sqrt{\pi}} \frac{\Gamma(\frac{j+1}{2})}{\Gamma(\frac{n}{2})} {_{2}{F}_1} \left(- \frac{j}{2}, \frac{n-j}{2}, \frac{1}{2}; - \frac{m^2}{n}\right).
\end{align*}
{Since the first term of the hypergeometric function is a nonpositive integer, the convergence of the function is guaranteed for any value of $m\in\R$ and $n\in\mathbb{N}$.}
\\
Consider now the case $j$ odd.
The term $ \E[Y ^{j-k}]$ is not zero only for $k$ odd, therefore the $k$ index is changed  $k\mapsto 2k+1$ to obtain: 
$$
\mathbb{E}{[(Y-m)^{j}]}=\sum_{k=0}^{\frac{j-1}{2}}\binom{j}{2k+1} \E[Y^{j-2k-1}](-m)^{2k+1}.
$$
One can now substitute the values of $\E[Y^{j-2k-1}]$ from \eqref{eq:rawmoments} and express the binomial coefficient in terms of the Gamma function to get:
\begin{align}\mathbb{E}{[(Y-m)^{j}]}&=-m\sum_{k=0}^{\frac{j-1}{2}}\frac{\Gamma(j+1)}{\Gamma(2k+2)\Gamma(j-2k)} \frac{\Gamma(\frac{j-2k}{2})\Gamma(\frac{n-j+2k+1}{2})}{\sqrt{\pi}\Gamma(\frac{n}{2})}n^{\frac{j-1}{2}}\left(\frac{m^2}{n}\right)^{k}\notag\\
&=-m\frac{n^{\frac{j-1}{2}}\Gamma(\frac{n-j+1}{2})}{\sqrt{\pi}\Gamma(\frac{n}{2})}\sum_{k=0}^{\frac{j-1}{2}}\frac{\Gamma(j+1)}{\Gamma(2k+2)\Gamma(j-2k)} \frac{\Gamma(\frac{j-2k}{2})\Gamma(\frac{n-j+2k+1}{2})}{\Gamma(\frac{n-j+1}{2})}\left(\frac{m^2}{n}\right)^{k},\label{eq:jodd}
\end{align}
where in the last equation we multiplied and divided by $\Gamma(\frac{n-j+1}{2})$. Again, we apply the Legendre duplication formula for the Gamma function %, $\Gamma(z)\Gamma(z+\frac{1}{2})=2^{1-2z}\sqrt{\pi}\Gamma(2z),~z\in\mathbb{R}^+$ 
to the following terms:
\begin{align*}
&\frac{\Gamma(\frac{j-2k}{2})}{\Gamma(j-2k)}=\frac{2^{1-j+2k}\sqrt{\pi}}{\Gamma(\frac{j-2k+1}{2})};\\
&\Gamma(j+1)=\frac{\Gamma(\frac{j+1}{2})\Gamma(\frac{j}{2}+1)}{2^{-j}\sqrt{\pi}};\\
&\Gamma(2k+2)=\frac{{\Gamma(k+1)\Gamma(k+\frac{3}{2})}}{2^{-2k-1}\sqrt{\pi}}.
\end{align*}
By substituting the above formulas in \eqref{eq:jodd}, one gets
\begin{align*}
\E[(Y-m)^{j}]&=-m\frac{n^{\frac{j-1}{2}}\Gamma(\frac{n-j+1}{2})}{\sqrt{\pi}\Gamma(\frac{n}{2})}\sum_{k=0}^{\frac{j-1}{2}}\frac{{\Gamma(\frac{j-1}{2}+1)\Gamma(\frac{j}{2}+1)}}{{\Gamma(k+1)\Gamma(k+\frac{3}{2})}}\frac{\sqrt{\pi}}{\Gamma(\frac{j-2k+1}{2})} \frac{\Gamma(\frac{n-j+2k+1}{2})}{\Gamma(\frac{n-j+1}{2})}\left(\frac{m^2}{n}\right)^{k}\\
=&-2m\frac{n^{\frac{j-1}{2}}\Gamma(\frac{j}{2}+1)\Gamma(\frac{n-j+1}{2})}{\sqrt{\pi}\Gamma(\frac{n}{2})}\sum_{k=0}^{\frac{j-1}{2}}(-1)^k\binom{\frac{j-1}{2}}{k}\frac{\sqrt{\pi}}{2{\Gamma(k+\frac{3}{2})}}\left(\frac{n-j+1}{2}\right)_k\left(-\frac{m^2}{n}\right)^{k},
&\end{align*}
where in the last step we multiplied and divided by $2$. Recalling that $\Gamma(\frac{3}{2})=\frac{\sqrt{\pi}}{2}$, we find: 
$$
\E[(Y-m)^{j}]=-2m\frac{n^{\frac{j-1}{2}}\Gamma(\frac{j}{2}+1)\Gamma(\frac{n-j+1}{2})}{\sqrt{\pi}\Gamma(\frac{n}{2})}{_2}F_1\left(-\frac{j-1}{2},\frac{n-j+1}{2},\frac{3}{2};-\frac{m^2}{n}\right)
$$
as desired.
{Since the first term of the hypergeometric function is a nonpositive integer, the convergence of the function is guaranteed for any value of $m\in\R$ and $n\in\mathbb{N}$}.
\end{proof}
\subsection{Proof of Lemma \ref{lemma}}\label{Asec:lemma}
\textbf{Lemma \ref{lemma}} Let $Y$ be a random variable with Student $t$ distribution with $n\in\mathbb{N}$ degrees of freedom and let $j\in\{1,\ldots,{n}\}$:
\begin{enumerate}\item[(1)] For $m\geq 0$ the following equation holds:
%\label{eq:mpos}
\begin{equation}
\E[((Y-m)_+)^{n-j}]=(m^2+n)^{\frac{n-2j+1}{2}}\E[((Y-m)_+)^{j-1}].
\end{equation}
\item[(2)] For $m<0$
%\label{eq:mneg}
\begin{equation}
\E[((Y-m)_-)^{n-j}]=(m^2+n)^{\frac{n-2j+1}{2}}\E[((Y-m)_-)^{j-1}].
\end{equation}
\end{enumerate}
\begin{proof} We first note that from \eqref{eq:uppermom} and \eqref{eq:lowermom}, the upper and lower partial moments of order $j\in\{0,\ldots,n-1\}$ are
\begin{align*}
\E[((Y-m)_+)^{j}]&=\int_m^{+\infty} (y-m)^{j}\textrm{d}F_Y(y)\quad \textrm{and } \\
\E[((Y-m)_-)^{j}]&=(-1)^j\int_{-\infty}^{m} (y-m)^{j}\textrm{d}F_Y(y).\\
\end{align*}

\begin{enumerate}\item[(1)] For $m\geq 0$, formula 3.254(2) in  \cite{zwillinger2015table} implies:
\begin{equation}\label{eq:zwillinger}
\int_m^{+\infty} (y-m)^{j}\textrm{d}F_Y(y) = C_nm^{j-n} B(n-j,j+1) {_{2}{F}_1} \left(\frac{n-j}{2},\frac{n-j+1}{2}, \frac{n+2}{2}; -\frac{n}{m^2}\right),
\end{equation}
where  $C_n=n^{\frac{n}{2}}\frac{\Gamma\left(\frac{n+1}{2}\right)}{\sqrt{\pi}\Gamma\left(\frac{n}{2}\right)}$. It follows that {for $j=1,\dots,n$}
\begin{equation*}
\int_m^{+\infty} (y-m)^{j-1}\textrm{d}F_Y(y) = C_nm^{j-n-1} B(n-j+1,j) {_{2}{F}_1} \left(\frac{n-j+1}{2}, \frac{n-j+2}{2}, \frac{n+2}{2}; -\frac{n}{m^2}\right)
\end{equation*}
and
\begin{equation}\label{eq:int}
\int_m^{+\infty} (y-m)^{n-j}\textrm{d}F_Y(y) =C_n m^{-j} B(n-j+1,j) {_{2}{F}_1} \left(\frac{j}{2}, \frac{j+1}{2}, \frac{n+2}{2}; -\frac{n}{m^2}\right).
\end{equation}
We then apply Euler's transformation to the hypergeometric function in \eqref{eq:int} to get
{\small
\begin{align*}
\int_m^{+\infty} (y-m)^{n-j}\textrm{d}F_Y(y) &= C_nm^{-j} B(n-j+1,j) m^{-n+2j-1}(m^2+n)^{\frac{n-2j+1}{2}}{_{2}{F}_1} \left(\frac{n-j+2}{2}, \frac{n-j+1}{2}, \frac{n+2}{2}; -\frac{n}{m^2}\right)\\
&=(m^2+n)^{\frac{n-2j+1}{2}}\int_m^{+\infty} (y-m)^{j-1}\textrm{d}F_Y(y),
\end{align*}
}
where in the last step  we used the symmetry of the hypergeometric function.% \red{Can we apply Euler's transformation even if the hypergeometric function is outside the unit circle? Sembrerebbe di sì con delle condizioni su $b,c,z$} \blue{Dal libro, le condizioni necessarie sono $c > b > 0$ (o equivalentemente per simmetria $c > a > 0$) e $|\mbox{arg}(1-z)| < \pi.$ Nel nostro caso sono soddisfatte entrambe.}
\item[(2)]  Consider the case $m<0$. For $j\in\{0,\ldots,{n-1}\}$ we apply a change of variable: $x=-y$ and use again formula 3.254(2) in  \cite{zwillinger2015table}, to get
\begin{align*}
\int_{-\infty}^{m}(y-m)^{j}\textrm{d}F_Y(y)&=(-1)^{j}\int_{-m}^{+\infty}(x-(-m))^{j}\textrm{d}F_Y(x)\\
&=(-1)^{j}(-1)^{j-n}C_nm^{j-n} B(n-j,j+1) {_{2}{F}_1} \left(\frac{n-j}{2},\frac{n-j+1}{2}, \frac{n+2}{2}; -\frac{n}{m^2}\right)\\
&=(-1)^{-n}C_nm^{j-n} B(n-j,j+1) {_{2}{F}_1} \left(\frac{n-j}{2},\frac{n-j+1}{2}, \frac{n+2}{2}; -\frac{n}{m^2}\right).\\
\end{align*}
It follows that {for $j=1,\dots,n$}
\begin{equation*}
\int_{-\infty}^m (y-m)^{j-1}\textrm{d}F_Y(y) = (-1)^{-n}C_nm^{j-n-1} B(n-j+1,j) {_{2}{F}_1} \left(\frac{n-j+1}{2}, \frac{n-j+2}{2}, \frac{n+2}{2}; -\frac{n}{m^2}\right)
\end{equation*}
and
\begin{align*}
&\int_{-\infty}^{m} (y-m)^{n-j}\textrm{d}F_Y(y) =\\
&= (-1)^{-n}C_nm^{-j}B(n-j+1,j) {_{2}{F}_1} \left(\frac{j}{2}, \frac{j+1}{2}, \frac{n+2}{2}; -\frac{n}{m^2}\right)\\
&=(-1)^{-n+2j-1} (m^2+n)^{\frac{n-2j+1}{2}}(-1)^{-n}C_nm^{-n+j-1}B(n-j+1,j) {_{2}{F}_1} \left(\frac{n-j+2}{2}, \frac{n-j+1}{2}, \frac{n+2}{2}; -\frac{n}{m^2}\right)\\
&=(-1)^{n+1}(m^2+n)^{\frac{n-2j+1}{2}}\int_{-\infty}^m (y-m)^{j-1}\textrm{d}F_Y(y),
\end{align*}
where in the third equality we applied again the Euler's transformation and the symmetry of the hypergeometric function.
\end{enumerate}
%\red{Sembra che per avere la convergenza anche per $z=1$ dal libro \cite{} bisogna richiedere $-n+2j-1>0$} \blue{Noi però avremmo il caso $m^2 = n$ e quindi $z = -1$. In questo caso, per poter applicare la trasformazione di Eulero valgono le condizioni enunciate prima.}
\end{proof}

\subsection{Proof Theorem \ref{th:moments}}\label{Asec:thmoments}
\textbf{Theorem \ref{th:moments}}
% Let $\red{Y\in L^{n-1}}(\Omega,\mathcal{F},\mathbb{P})$ be a random variable with Student $t$ distribution with $n\in\mathbb{N}$ degrees of freedom. For $j\in\{1,\ldots,n\}$ and $m\in\R$,  the following decomposition holds:
%\begin{enumerate}\item For $n$ odd:
%\begin{equation}\label{eq:moments}
%\mathbb{E}[(Y-m)^{n-j}]=\mathbb{E}[(Y-m)^{j-1}](m^2+n)^{\frac{n-2j+1}{2}}.
%\end{equation}
%\item For $n$ even:
%\begin{align}\label{eq:momentseven}
%\mathbb{E}[(Y-m)^{n-j}]&={(m^2+n)^{\frac{n-2j+1}{2}}\left[-\mathbb{E}[(Y-m)^{j-1}]+2\mathbb{E}[((Y-m)_+)^{j-1}] \right]}.
%\end{align}
%%(m^2+n)^{\frac{n-2j+1}{2}}\left[\mathbb{E}[(Y-m)^{j-1}]-2 \int_{-\infty}^m (y-m)^{j-1}\textrm{d}F(y)\right]\notag\\
%%&=\blue{(m^2+n)^{\frac{n-2j+1}{2}}\left[-\mathbb{E}[(Y-m)^{j-1}]+2 \int_m^{+\infty} (y-m)^{j-1}\textrm{d}F(y)\right]}\notag\\
%
%\end{enumerate}
 Let $Y$ be a random variable with Student $t$ distribution with $n\in\mathbb{N}$ degrees of freedom. For $j\in\{1,\ldots,n\}$ and $m\in\R$, the following equation for the central moments about the point $m$ holds:
\begin{enumerate}\item For $n$ odd:
%\label{eq:moments}
\begin{equation}
\mathbb{E}[(Y-m)^{n-j}]=(m^2+n)^{\frac{n-2j+1}{2}}\mathbb{E}[(Y-m)^{j-1}].
\end{equation}
\item For $n$ even:
%\label{eq:momentseven}
\begin{align}
\mathbb{E}[(Y-m)^{n-j}]&={(m^2+n)^{\frac{n-2j+1}{2}}\left[-\mathbb{E}[(Y-m)^{j-1}]+2\mathbb{E}[((Y-m)_+)^{j-1}] \right]}.
\end{align}
%(m^2+n)^{\frac{n-2j+1}{2}}\left[\mathbb{E}[(Y-m)^{j-1}]-2 \int_{-\infty}^m (y-m)^{j-1}\textrm{d}F(y)\right]\notag\\
%&=\blue{(m^2+n)^{\frac{n-2j+1}{2}}\left[-\mathbb{E}[(Y-m)^{j-1}]+2 \int_m^{+\infty} (y-m)^{j-1}\textrm{d}F(y)\right]}\notag\\
\end{enumerate}

\begin{proof} 
\begin{enumerate}
\item We know that $n$ is odd and first consider the case $j$ odd, from which it follows that $n-j$ and $j-1$ are even. From the expression \eqref{eq:cenmom} for an even power, we get:
$$
\mathbb{E}{[(Y-m)^{j-1}]}=n^{\frac{j-1}{2}} \frac{\Gamma(\frac{n-j+1}{2})\Gamma(\frac{j}{2})}{\sqrt{\pi}\Gamma(\frac{n}{2})} {_{2}{F}_1} \left(- \frac{j-1}{2}, \frac{n-j+1}{2}, \frac{1}{2}; - \frac{m^2}{n}\right)$$ 
and 
$$
\mathbb{E}{[(Y-m)^{n-j}]}=n^{\frac{n-j}{2}} \frac{\Gamma(\frac{n-j+1}{2})\Gamma(\frac{j}{2})}{\sqrt{\pi}\Gamma(\frac{n}{2})} {_{2}{F}_1} \left(- \frac{n-j}{2}, \frac{j}{2}, \frac{1}{2}; - \frac{m^2}{n}\right).
$$
Using the symmetry of the hypergeometric function and applying the Euler's transformation to ${_{2}{F}_1} \left(\frac{j}{2}, - \frac{n-j}{2}, \frac{1}{2}; - \frac{m^2}{n}\right)$ concludes the proof.

For $j$ even, $n-j$ and $j-1$ are odd. Using again the expression \eqref{eq:cenmom} for an odd power, one gets
$$
\mathbb{E}{[(Y-m)^{j-1}]}=\frac{-2m n^{\frac{j-2}{2}}\Gamma(\frac{j+1}{2})\Gamma(\frac{n-j+2}{2})}{\sqrt{\pi}\Gamma(\frac{n}{2})}{_{2}{F}_1} \left(- \frac{j-2}{2}, \frac{n-j+2}{2}, \frac{3}{2}; - \frac{m^2}{n}\right)
$$
and 
$$
\mathbb{E}{[(Y-m)^{n-j}]}=\frac{-2m n^{\frac{n-j-1}{2}}\Gamma(\frac{j+1}{2})\Gamma(\frac{n-j+2}{2})}{\sqrt{\pi}\Gamma(\frac{n}{2})} {_{2}{F}_1}(-\frac{n-j-1}{2},\frac{j+1}{2},\frac{3}{2};-\frac{m^2}{n}).
$$
Using again the symmetry of the hypergeometric function and applying  the Euler's transformation to ${_{2}{F}_1}(\frac{j+1}{2},-\frac{n-j-1}{2},\frac{3}{2};-\frac{m^2}{n})$ concludes the proof.
\item For $n$ even, we first consider $j$ even. Since $n-j$ is even and $j-1$ is odd, using \eqref{eq:cenmom} we find
\begin{equation}\label{eq:2}
\mathbb{E}{[(Y-m)^{n-j}]}=\frac{ n^{\frac{n-j}{2}}\Gamma(\frac{n-j+1}{2})\Gamma(\frac{j}{2})}{\sqrt{\pi}\Gamma(\frac{n}{2})} {_{2}{F}_1}\left(-\frac{n-j}{2},\frac{j}{2},\frac{1}{2};-\frac{m^2}{n}\right)
\end{equation}
and 
\begin{align}
&(m^2+n)^{\frac{n-2j+1}{2}}\mathbb{E}[(Y-m)^{j-1}]=\notag\\&\qquad=(m^2+n)^{\frac{n-2j+1}{2}}(-2mn^{\frac{j-2}{2}})\frac{\Gamma\left(\frac{j+1}{2}\right)\Gamma\left(\frac{n-j+2}{2}\right)}{\sqrt{\pi}\Gamma\left(\frac{n}{2}\right)} {_{2}{F}_1}\left(-\frac{j-2}{2},\frac{n-j+2}{2},\frac{3}{2};-\frac{m^2}{n}\right)\notag\\
&\qquad={-2mn^{\frac{n-j-1}{2}}}\frac{\Gamma\left(\frac{j+1}{2}\right)\Gamma\left(\frac{n-j+2}{2}\right)}{\sqrt{\pi}\Gamma\left(\frac{n}{2}\right)} {_{2}{F}_1}\left(\frac{j+1}{2},-\frac{n-j-1}{2},\frac{3}{2};-\frac{m^2}{n}\right),\label{eq:3}
\end{align}
where in the last equality we used the Euler's transformation for the hypergeometric function. {Furthermore, for $m\geq0$ Lemma \ref{lemma}  and  formula 3.254(2) in  \cite{zwillinger2015table} provide
\begin{align}
2(m^2+n)^{\frac{n-2j+1}{2}}\E[((Y-m)_+)^{j-1}]&=2\E[((Y-m)_+)^{n-j}]\notag\\
&=2C_n m^{-j} B(n-j+1,j) {_{2}{F}_1} \left(\frac{j}{2}, \frac{j+1}{2}, \frac{n+2}{2}; -\frac{n}{m^2}\right)\label{eq:1}.
\end{align}
Our goal is then to show that
\begin{equation}\label{eq:toprove}
2\E[((Y-m)_+)^{n-j}]=\mathbb{E}{[(Y-m)^{n-j}]}+(m^2+n)^{\frac{n-2j+1}{2}}\mathbb{E}{[(Y-m)^{j-1}]}
\end{equation}
using the property in \eqref{property} of the hypergeometric functions.
From  \eqref{eq:2}, \eqref{eq:3} and  \eqref{eq:1}, \eqref{eq:toprove} becomes
\begin{align*}
&{_{2}{F}_1}\left(\frac{j}{2},\frac{j+1}{2},\frac{n+2}{2};-\frac{n}{m^2}\right)=\\
&\frac{m^{j}}{2C_nB(n-j+1,j)}\left[\frac{ n^{\frac{n-j}{2}}\Gamma(\frac{n-j+1}{2})\Gamma(\frac{j}{2})}{\sqrt{\pi}\Gamma(\frac{n}{2})} {_{2}{F}_1}\left(\frac{j}{2},-\frac{n-j}{2},\frac{1}{2};-\frac{m^2}{n}\right)\right]+\\
&\frac{m^{j}}{2C_nB(n-j+1,j)}\left[{-2mn^{\frac{n-j-1}{2}}}\frac{\Gamma\left(\frac{j+1}{2}\right)\Gamma\left(\frac{n-j+2}{2}\right)}{\sqrt{\pi}\Gamma\left(\frac{n}{2}\right)} {_{2}{F}_1}\left(\frac{j+1}{2},-\frac{n-j-1}{2},\frac{3}{2};-\frac{m^2}{n}\right)
\right].\notag\end{align*}
We first note that for  $a=\frac{j}{2},~b=\frac{j+1}{2},~c=\frac{n+2}{2}$ and $z=-\frac{n}{m^2}$ the hypergeometric functions above have the correct terms as the ones required by the property in \eqref{property}. Therefore, we are left to show that the multiplicative coefficients {associated to the two hypergeometric functions on the right-hand side} correspond. For the first multiplicative term, the desired result follows from  
\begin{align*}
\frac{m^{j}}{2C_nB(n-j+1,j)}\frac{ n^{\frac{n-j}{2}}\Gamma(\frac{n-j+1}{2})\Gamma(\frac{j}{2})}{\sqrt{\pi}\Gamma(\frac{n}{2})}&=
\frac{m^{j}\sqrt{\pi}\Gamma\left(\frac{n}{2}\right)\Gamma\left({n+1}\right)}{2\Gamma\left(\frac{n+1}{2}\right)n^{\frac{n}{2}}\Gamma\left(n-j+1\right)\Gamma\left(j\right)}\frac{ n^{\frac{n-j}{2}}\Gamma\left(\frac{n-j+1}{2}\right)\Gamma\left(\frac{j}{2}\right)}{\sqrt{\pi}\Gamma\left(\frac{n}{2}\right)}\\
&=\frac{\left(\frac{n}{m^2}\right)^{-\frac{j}{2}}\Gamma\left({n+1}\right)\Gamma\left(\frac{n-j+1}{2}\right)\Gamma\left(\frac{j}{2}\right)}{2\Gamma\left(\frac{n+1}{2}\right)\Gamma\left(n-j+1\right)\Gamma\left(j\right)}\\
&={\frac{\Gamma(\frac{n+2}{2}) \Gamma(\frac{1}{2})}{\Gamma(\frac{j+1}{2}) \Gamma(\frac{n-j+2}{2})} \left(\frac{n}{m^2}\right)^{-\frac{j}{2}}},
\end{align*}
where in the last step we used the Legendre duplication formula for the Gamma function. {Moving on to the second multiplicative coefficient we have:
\begin{align*}
\frac{m^{j} (-2mn^{\frac{n-j-1}{2}})}{2C_nB(n-j+1,j)}\frac{\Gamma\left(\frac{j+1}{2}\right)\Gamma\left(\frac{n-j+2}{2}\right)}{\sqrt{\pi}\Gamma\left(\frac{n}{2}\right)}&=-\frac{m^{j+1} n^{\frac{n-j-1}{2}} \sqrt{\pi}\Gamma\left(\frac{n}{2}\right)\Gamma\left({n+1}\right)}{\Gamma\left(\frac{n+1}{2}\right)n^{\frac{n}{2}}\Gamma\left({n-j+1}\right)\Gamma\left({j}\right)}\frac{\Gamma\left(\frac{j+1}{2}\right)\Gamma\left(\frac{n-j+2}{2}\right)}{\sqrt{\pi}\Gamma\left(\frac{n}{2}\right)}\\
&=-\frac{\left(\frac{n}{m^2}\right)^{-\frac{j+1}{2}}\Gamma\left({n+1}\right)\Gamma\left(\frac{j+1}{2}\right)\Gamma\left(\frac{n-j+2}{2}\right)}{\Gamma\left(\frac{n+1}{2}\right)\Gamma\left(n-j+1\right)\Gamma\left(j\right)}\\
&=\frac{\Gamma(\frac{n+2}{2}) \Gamma(-\frac{1}{2})}{\Gamma(\frac{j}{2}) \Gamma(\frac{n-j+1}{2})} \left(\frac{n}{m^2}\right)^{-\frac{j+1}{2}},
\end{align*}
where in the last equality we used that $-2\sqrt{\pi} = \Gamma\left(-\frac{1}{2}\right)$.
}}

{When} $m<0$, we first note that,
\begin{align*}\mathbb{E}[(Y-m)^{n-j}]&={(m^2+n)^{\frac{n-2j+1}{2}}\left[-\mathbb{E}[(Y-m)^{j-1}]+2\mathbb{E}[((Y-m)_+)^{j-1}] \right]}\\
&={(m^2+n)^{\frac{n-2j+1}{2}}\left[\mathbb{E}[(Y-m)^{j-1}]+2(-1)^{j-1}\mathbb{E}[((Y-m)_-)^{j-1}] \right]}.\\
\end{align*}
Lemma \ref{lemma} then provides {
\begin{align}
2(m^2+n)^{\frac{n-2j+1}{2}}\E[((Y-m)_-)^{j-1}]&=2\E[((Y-m)_-)^{n-j}]\notag\\
&=2C_nm^{-j}B(n-j+1,j){_{2}{F}_1}\left(\frac{j}{2},\frac{j+1}{2},\frac{n+2}{2};-\frac{n}{m^2}\right)\label{eq:1mneg}.
\end{align}}
Since $j$ is even,  our goal is, as before,  to show that
\begin{equation}\label{eq:toprovemneg}
2 \E[((Y-m)_-)^{j-1}]=\mathbb{E}{[(Y-m)^{n-j}]}-(m^2+n)^{\frac{n-2j+1}{2}}\mathbb{E}{[(Y-m)^{j-1}]}
\end{equation}
using {the property in \eqref{property} of the hypergeometric functions. From  \eqref{eq:2}, \eqref{eq:3} and  \eqref{eq:1mneg}, \eqref{eq:toprovemneg} becomes
\begin{align*}
&{_{2}{F}_1}\left(\frac{j}{2},\frac{j+1}{2},\frac{n+2}{2};-\frac{n}{m^2}\right)=\\
&\frac{m^{j}}{2C_nB(n-j+1,j)}\left[\frac{ n^{\frac{n-j}{2}}\Gamma(\frac{n-j+1}{2})\Gamma(\frac{j}{2})}{\sqrt{\pi}\Gamma(\frac{n}{2})} {_{2}{F}_1}\left(\frac{j}{2},-\frac{n-j}{2},\frac{1}{2};-\frac{m^2}{n}\right)\right]-\\
&\frac{m^{j}}{2C_nB(n-j+1,j)}\left[-2mn^{\frac{n-j-1}{2}}\frac{\Gamma\left(\frac{j+1}{2}\right)\Gamma\left(\frac{n-j+2}{2}\right)}{\sqrt{\pi}\Gamma\left(\frac{n}{2}\right)} {_{2}{F}_1}\left(\frac{j+1}{2},-\frac{n-j-1}{2},\frac{3}{2};-\frac{m^2}{n}\right)
\right].\notag\end{align*}
The proof follows analogously to the case $m\geq 0$.\\

For the case $j$ odd, $n-j$ is odd and $j-1$ is even. The proof is obtained as before exploiting Lemma \ref{lemma} and the hypergeometric function property in \eqref{property} applied to the cases $m\geq0$ and $m<0$.} 
%In particular, we have that $n-j$ is odd while $j-1$ is even. Using again the expression \eqref{eq:cenmom} for odd and even powers, one gets
%$$
%\mathbb{E}{[(Y-m)^{j-1}]}=n^{\frac{j-1}{2}} \frac{\Gamma(\frac{n-j+1}{2})\Gamma(\frac{j}{2})}{\sqrt{\pi}\Gamma(\frac{n}{2})} {_{2}{F}_1} \left(- \frac{j-1}{2}, \frac{n-j+1}{2}, \frac{1}{2}, - \frac{m^2}{n}\right)
%$$
%and 
%$$
%\mathbb{E}{[(Y-m)^{n-j}]}=\frac{-2m n^{\frac{n-j-1}{2}}\Gamma(\frac{j+1}{2})\Gamma(\frac{n-j+2}{2})}{\sqrt{\pi}\Gamma(\frac{n}{2})} {_{2}{F}_1}(-\frac{n-j-1}{2},\frac{j+1}{2},\frac{3}{2};-\frac{m^2}{n}).
%$$
%Then, it suffices to exploit property \ref{property} and show that the multiplicative coefficients correspond
\end{enumerate}

\end{proof}

\bibliographystyle{chicago}
 \bibliography{bibLpquantile_short}

\end{document}